\documentclass[reqno,11pt]{amsart}
\usepackage{color} 
\usepackage[left=1 in, right=1 in,top=1 in, bottom=1 in]{geometry}
\usepackage{amsfonts,amsmath,amsthm,amssymb,mathrsfs}
\newtheorem{theorem}{Theorem}[section]
\newtheorem{definition}[theorem]{Definition}
\newtheorem{lemma}[theorem]{Lemma}
\newtheorem{proposition}[theorem]{Proposition}
\newtheorem{remark}[theorem]{Remark}

\numberwithin{equation}{section}
\usepackage[pdfstartview=FitH,colorlinks=true,backref=page]{hyperref}
\hypersetup{urlcolor=red, linkcolor=blue,citecolor=red, CJKbookmarks=true}
\allowdisplaybreaks
\makeatletter
\newcommand*{\rom}[1]{\expandafter\@slowromancap\romannumeral #1@}
\newcommand{\rcong}{\stackrel{r}{\rightharpoonup}}

\newcommand{\bd}{\mathrm{d}}
\newcommand{\mm}{\mathrm{M}}
\newcommand{\grad}{\nabla_{x}}
\newcommand{\vdot}{v\!\cdot\!\grad}
\newcommand{\Q}{\mathcal{Q}}
\newcommand{\xmus}{ \dd \mu \bd \sigma_x \bd s }
\newcommand{\uu}{\mathrm{u}}

\makeatother
\usepackage{tikz}

\def \dd{\mathrm{d}}
\def\la {\langle}
\def\ra {\rangle}
\newcommand{\weak}{\rightharpoonup}
\def\B {\mathcal{B}}

\def\eps {{\epsilon}}
\title[Existence and NS limits]{The Boltzmann equation with incoming boundary condition: global solutions and Navier-Stokes limit}

\author[N. Jiang]{Ning Jiang}
\address[Ning Jiang]{\newline School of Mathematics and Statistics, Wuhan University, Wuhan, 430072, P. R. China}
\email{njiang@whu.edu.cn}

\author[X. Zhang]{Xu Zhang}
\address[Xu Zhang]{\newline School of Mathematics and Statistics, Wuhan University, Wuhan, 430072, P. R. China}
\email{xuzhang889@whu.edu.cn}
\setcounter{tocdepth}{3}

\begin{document}
	\begin{abstract}
We consider the Boltzmann equations with cutoff collision kernels in bounded domains. For the initial data with finite physical bounds, we prove the existence of global-in-time renormalized solutions in the sense of DiPerna-Lions endowed with incoming boundary condition. Moreover, we justify the limit as the Knudsen number $\eps\rightarrow 0$ to Leray solutions of the incompressible Navier-Stokes-Fourier equations with homogeneous Dirichlet conditions from renormalized solutions of the scaled Boltzmann equations when the incoming data are close to the global Maxwellian in the sense of the boundary relative entropy of order $O(\eps^3)$.
	\end{abstract}
\maketitle	

\section{Introduction}
The Boltzmann (or Maxwell-Boltzmann) equation is an integro-differentiable equation
\begin{equation}\label{BE}
\partial_t F + \vdot F = \Q(F,F)\,,
\end{equation}
which models the statistical evolution of a rarefied gas. In equation \eqref{BE}, $F(t,x,v)$ is a non-negative measurable function, which denotes the number density of the gas molecules at time $t \geq 0$, at the position $ x \in \Omega$, with velocity $v \in \mathbb{R}^3$. Here $\Omega$ is the space domain which could be the whole space $\mathbb{R}^3$, a torus $\mathbb{T}^3$, or a bounded (or unbounded) domain in $\mathbb{R}^3$ with boundary $\partial \Omega \neq \varnothing$ (in the current paper, $\Omega$ will be this case). Furthermore, $\Q(F, F)$ is the collision operator whose structure is described below.

The Boltzmann equation \eqref{BE} is given an initial data which satisfies some natural physical bounds (bounded mass, momentum, energy and entropy, etc.). More specifically,
\begin{equation}\label{IC-1}
F|_{t=0} = F_0(x,v)\quad \mbox{in}\quad\! \Omega \times \mathbb{R}^3\,,
\end{equation}
which satisfies
\begin{equation}\label{IC-bounds}
F_0 \geq 0\quad \mbox{a.e.}\quad \mbox{and}\quad\! \iint_{  \mathbb{R}^3 \times \Omega}F_0(1+ |x|^2 + |v|^2 + |\log F_0|)\,\bd v\bd x < \infty\,.
\end{equation}

The well-posedness of the Boltzmann equation \eqref{BE} is a fundamental problem in mathematical physics. Besides many results on the smooth solutions which required the initial data $F_0$ is ``small" in some functional spaces, the first global in time solution with ``large" data, i.e. the initial data $F_0$ satisfies \eqref{IC-bounds}: only some finite physical bounds, without any smallness requirements on the size of $F_0$, was proved in DiPerna-Lions' theorem \cite{diperna-lions1989cauchy}  for $\Omega = \mathbb{R}^3$. Since in the natural functional spaces of the number density $F(t,x,v)$, say $L^1\cap L\log L$, the collision term $\Q(F,F)$ in \eqref{BE} is not even locally integrable, which makes weak solutions to the Boltzmann equation can not be defined in the usual sense.  Instead, under the Grad's angular cutoff assumption and a mild decay condition on the collision kernel which we will describe in details later, DiPerna and Lions defined the so-called renormalized solutions of \eqref{BE} and proved that a sequence of renormalized solutions which satisfy only the physically natural {\em a priori} bounds is weakly compact in $L^1$. From this stability they deduced global existence of renormalized solutions. Later on, in \cite{alex-villani2002noncutoff} Alexandre and Villani extended the results of \cite{diperna-lions1989cauchy} to the Boltzmann equations with kernels of some long-range interactions.

The Boltzmann equation in domains with boundary is more physically relevant, both in applied fields and theoretical research. For general setting of the initial-boundary value problems, see \cite{cercignani1988book} and \cite{cercignani-illuner-pul2013book-dilutegases}. Among many boundary conditions which can be imposed on the Boltzmann equations, we introduce here two simplest prototypes: Maxwell reflection and incoming boundary conditions. The former was proposed by Maxwell in \cite{maxwell1879boundaryconditions},  which  stated that the gas molecules back to the domain at the boundary come one part from the specular reflection of the molecules escaping the domain, the other part from those entering the wall, interacting with the molecules in the wall, and re-evaporating back to the domain with the thermal dynamical equilibrium state of the wall. The latter is more direct: the number density of the gas molecules back to the domain is prescribed. Most of the existing literatures focused on these two boundary conditions, or their linear combinations.

After DiPerna-Lions' work, extending their results to domains with boundary is a natural but nontrivial question. One of the main difficulty is that the functional space for renormalized solutions is very weak (say, $L\log L$), and the space for the trace of the solutions is even weaker. Thus, besides the compactness and stability of the sequence of traces are hard to establish, even the definition of the trace itself is a nontrivial issue. Most of the previous work could only establish inequality for the trace of the boundary values, see \cite{hamdache1992initial, mischler2000vlasovtrace, mischler2000vpb}. In particular, in \cite{mischler2000vpb}, an inequality for the boundary condition of the linear combination of Maxwell and incoming boundary conditions was proved.

The first complete answer in this direction is due to Mischler \cite{mischler2010asens}, who proved global renormalized solutions of many types of the kinetic equations (including Boltzmann equations) with Maxwell condition,  based on some new observations on weak-weak convergence and his previous results on the traces of kinetic equations \cite{mischler2000vlasovtrace, mischler2000vpb}. We emphasize that in \cite{mischler2010asens}, the equality for the traces was established.

In early 90's, starting from \cite{bgl1991formal, bgl1993convergence} Bardos-Golse-Levermore initialed an program (briefly, BGL program) to justify hydrodynamic limits in the framework of renormalized solutions. After many attempts to overcome the technical difficulties left in \cite{bgl1993convergence}, see \cite{bgl2000acoustic, lm2001acoustic, gl2002stokes,golse-srm-2002-l1,srm2003}, finally, the first complete rigorous Navier-Stokes-Fourier (briefly, NSF) limit was obtained by Golse and Saint-Raymond\cite{gsrm2004} for a class of bounded collision kernel. Later, they \cite{gsrm2009ns-hard} generalized their result to the cutoff hard potential kernel. Levermore and Masmoudi   \cite{lm2010soft} considered the NSF limits for  general potential kernels including soft potentials.

For bounded domains, based on Mischler's result the Boltzmann equation with Maxwell boundary, Masmoudi and Saint-Raymond \cite{masmoudi-srm2003stokesfourier} justified the Stokes limit, and the weak convergence was extended to NSF limit in \cite{jm2017layer} and \cite{saint2009book}. Furthermore, in \cite{jm2017layer}, by employing the kinetic-fluid boundary layer which damped the fast oscillating acoustic waves, the limit was enhanced to strong convergence.

All the above results, both existence of renormalized solutions and hydrodynamic limits in bounded domains, are about the Maxwell reflection boundary. The corresponding results for the incoming boundary are the main concern of the current paper. Overall, regarding to renormalized solutions, Mischler's method in \cite{mischler2010asens} can be employed here. In fact, for the incoming data case we can obtain better estimates on the traces, and as a consequence, the proof is even shorter. Furthermore, we emphasize that for the hydrodynamic limits, the incoming condition has quite different features with Maxwell condition. We explain the details as follows.

We take the acoustic limit of the Boltzmann equation as an example. From Maxwell condition, the acoustic system derived from the Boltzmann equation is endowed with the {\em impermeable} boundary condition, i.e. $\mathrm{u}\cdot\mathrm{n}=0$ (here $\mathrm{u}$ is the velocity and $\mathrm{n}$ is the outer normal vector on the boundary), see \cite{jlm2010acoustic}. However, as formally derived in \cite{agj2017acoustic}, from the Boltzmann equation with incoming condition, the acoustic system has the {\em jump} boundary condition $\rho + \theta = \alpha (\mathrm{u}\!\cdot\!\mathrm{n})$, where the constant $\alpha > 0$ is determined from Bardos-Caflisch-Nicolenako theory  \cite{bcn1986} of kinetic boundary layer equation. From the classical theory for the constant coefficients linear first order hyperbolic system (for example, see \cite{bg-s-hyperbolic}), the impermeable and jump conditions are the only two boundary conditions which make the acoustic system well-posed. From the hydrodynamic limits point of view, for the Boltzmann equation, the incoming condition is a different type with Maxwell condition. Furthermore, for the incompressible Navier-Stokes limits, the boundary condition derived from kinetic equations with incoming data has also many new features comparing with that of Maxwell condition. For more details of formal analysis, see Sone group's works in \cite{sone2002book} and \cite{sone2007}. In this sense, the Boltzmann equation with incoming data has the interests of its own to be investigated. This is the main concern and the novelty of this paper.

This paper includes two main results. The first is global existence of renormalized solutions for the Boltzmann equation with incoming data. We prove this basically following the strategy of Mischler \cite{mischler2010asens, mischler2000vlasovtrace, mischler2000vpb}. In fact, we proved that the traces of solutions for incoming data have better estimates, compared to Maxwell condition in the following sense: for the Maxwell condition, using the {\it Darroz\`es-Guiraud} information, only partial trace estimates are obtained. More specifically, only the estimate for the fluctuations to the trace average was established. For incoming data, we obtain the full estimates for traces. Moreover, we have a new discovery that for local conservation law of mass, the trace operator is commutative with the integration operator in the direction orthogonal to the boundary, see Remark \ref{remark}. This new commutative property is very useful in verifying acoustic limit (see \cite{jlm2010acoustic}), but it was unknown  in \cite{mischler2010asens} for Maxwell condition.

The second result of this paper is about the incompressible NSF limit from the Boltzmann equation with ``well-prepared" incoming boundary, i.e. the boundary data is close to the global Maxwellian M$(v)=(\sqrt{2\pi})^{-3} \exp(-\frac{|v|^2}{2})$ in the sense of the incoming boundary relative entropy with order $O(\eps^3)$ (while the initial data are close to M($v$) with order $O(\eps^2)$ respectively). In this case, we justify the convergence to NSF system with homogeneous Dirichlet condition. We emphasize that for the bounday data not close to $\mm(v)$, the uniform entropy bound is not available, which makes the incompressible fluid limit very hard to be justified. We leave this to the future work. Nevertheless, to our best acknowledgement, the NSF limit obtained in this paper is the first rigorous justification of the diffusive limit from the Boltzmann equation with incoming boundary condition.

In this rest of this Introduction, we first introduce more detailed information on the Boltzmann equation in particular the collision kernels and the boundary condition so that we can state our main results precisely.

\subsection{Boltzmann collision kernels}

In Boltzmann equation \eqref{BE}, $\Q$ is the Boltzmann collision operator, which acts only on the velocity dependence of $f$ quadratically:
\begin{equation}\label{collision-original}
\begin{split}
\Q(F,F)=&\int_{\mathbb{R}^3\times\mathbb{S}^{2}}(F'F'_*-FF_*) b(v-v_*,\omega) \bd v_*\bd\omega\,,
\end{split}
\end{equation}
where $F'=F(v')$, $F'_*= F(v'_*)$, $F_*= F(v_*)$ ($t$ and $x$ are only parameters), and the formulae
\begin{align*}
\begin{cases}
v' = \frac{v + v_*}{2} + \frac{|v-v_*|}{2}\omega \\
v_*'= \frac{v + v_*}{2} - \frac{|v-v_*|}{2}\omega\,,
\end{cases}
\end{align*}
yields a parametrization of the set of solutions to the conservation laws of elastic collision
\begin{align*}
\begin{cases}
v+ v_* = v' + v'_* \\
|v|^2 + |v_*|^2 =    |v'|^2 + |v'_*|^2\,.
\end{cases}
\end{align*}
Here $v$ and $v_*$ denote the velocities of two particle before the elastic collision, and $v'$ and $v_*'$ denotes the post-collision velocities. $\omega $ is equal to $\frac{v' -  v_*'}{| v' - v_*'|}$, belonging to $\mathbb{S}^2$ (the unit sphere in $\mathbb{R}^3$).  The nonnegative and a.e. finite weight function $b(v-v_*, \omega)$, called {  cross-section}, is assumed to depend only on the relative velocity $|v-v_*|$ and cosine of the derivation angle  $ (\frac{v-v_*}{|v-v_*|}, \omega)$. For a given interaction model, the cross section can be computed in a semi-explicit way by solving a classical scattering problem, see for instance,   \cite{cercignani1988book}. A typical example is that in dimension 3, for the inverse $s\mbox{-}$power repulsive forces (where $s > 1$ is the exponent of the potential), if denoted by $\kappa = \frac{v -  v_*}{| v - v_*|}$ and $\omega = \frac{v' -  v_*'}{| v' - v_*'|}$,
\begin{equation}\label{s-power-1}
b(v-v_*, \omega) = |v-v_*|^\gamma b(\kappa \cdot \omega) = |v-v_*|^\gamma b(\cos \theta)\,,\quad \gamma = \tfrac{s-5}{s-1}\,,
\end{equation}
and
\begin{equation}\label{s-power-2}
\sin\theta b(\cos\theta) \approx K \theta^{-1-s'}\quad\text{as}\quad\!\theta \to 0\,,\quad\text{where}\quad\! s'=\tfrac{2}{s-1}\quad\!\text{and}\quad\! K >0\,.
\end{equation}
Notice that, in this particular situation, $b(z,\omega)$ is not locally integrable, which is not due to the specific form of inverse power potential. In fact, one can show (see   \cite{villani2003book} ) that a non-integrable singularity arises if and only forces of infinite range are present in the gas. Thus, some assumptions must be made on the cross section to make the mathematical treatment of the Boltzmann equation convenient.

We first prove the existence of renormalized solutions.   For this purpose, the assumption on the cross section is the same as DiPerna and Lions in \cite{diperna-lions1989cauchy}, i.e. Grad's angular cutoff, namely, that {the cross section be integrable}, locally in all variables. More precisely, they assumed
\begin{equation}\label{Grad-cutoff-1}
A(z)= \int_{\mathbb{S}^{2}}b(z,\omega)\,\bd \omega \in L^1_{loc}(\mathbb{R}^3)\,,
\end{equation}
together with a condition of mild growth of $A$:
\begin{equation}\label{Grad-cutoff-2}
(1+|v|^2)^{-1}\int_{|z-v|\leq R}A(z)\,\bd z\to 0\quad\text{as}\quad\! |v|\to \infty\,,\quad\text{for all}\quad\! R<\infty\,.
\end{equation}

In the second part of the paper, we will study the incompressible NSF limit from the Boltzmann equation, for which in addition to the Grad's cutoff and the assumption \eqref{Grad-cutoff-2}, we need more restrictions on the cross-section. Since we justify the limit in the framework of \cite{lm2010soft} for the interior part, we make the same assumptions on the kernel as in \cite{lm2010soft}. For the convenience of the readers, we list as follows: the cross section $b$ satisfies
\begin{itemize}
	\item Assume that $\hat{b}$ has even symmetry in $\omega$,
	\begin{align}
	\label{lm-assump-1}
	\begin{cases}
	0 \le  b(z,\omega) = |v-v_*|^\gamma \hat{b}(\kappa \cdot \omega),~a.e. ~~\text{for} ~~-3 < \gamma \le 1,~\\
	\int_{\mathbb{S}^2} \hat{b}(\kappa \cdot \omega) \dd \omega < +\infty.
	\end{cases}
	\end{align}
	\item Attenuation assumption. Let $a(v)= \int_{\mathbb{R}^3} \bar{b}(v - v_* ) \mm(v_*)\dd v_*$. There exists some constant $C_a >0$ such that
	\begin{align}
	C_a ( 1 + |v|)^\alpha \le a(v), ~~\alpha \in \mathbb{R}.
	\end{align}
	\item Loss operator assumption.  Assume that there exists $s \in (1, +\infty]$ such that
	\begin{align}
	\label{lm-assump-4}
	\sup_{v \in \mathbb{R}^3}\bigg( \int_{\mathbb{R}^3} \vert \frac{\bar{b}({v - v_*})}{a(v) a(v_*)}\vert^s a(v_*) \mm(v_*) \dd v_*\bigg)^{\frac{1}{s}} < +\infty.
	\end{align}

	\item Gain operator assumption. The gain operator is given by
	\begin{align}
	\label{lm-assump-5}
	\mathcal{K}^+(g) = \frac{1}{2a(v)}\int_{\mathbb{S}^2\times\mathbb{R}^3}(g' + g_*)b(v-v_*,\omega)\dd \omega\mm(v_*)\dd v_*.
	\end{align}
	The gain operator assumption requires that $\mathcal{K}^+$ is a compact operator from $L^2(a\mm \dd v)$ to $L^2(a\mm \dd v)$.
	
\end{itemize}

\subsection{Incoming boundary condition}

Let $\Omega$ be an open and bounded subset of $\mathbb{R}^3$ and set $\mathcal{O}= \Omega \times \mathbb{R}^3$ and $\mathcal{O}_T=(0,T)\times\mathcal{O}$. We assume that the boundary $\partial\Omega$ is sufficiently smooth.
The regularity that we need is that there exists a vector field $\mathrm{n}\in W^{2,\infty}(\Omega\,;\mathbb{R}^3)$ such that $\mathrm{n}(x)$ coincides with the outward unit normal vector at $x\in\partial\Omega$. We define the outgoing and incoming sets $\Sigma_+$ and $\Sigma_-$ at the boundary $\partial\Omega$ as $\Sigma_{\pm} = \{(x,v)|x\in \partial\Omega\,, v\in \mathbb{R}^3\,, \pm v\cdot\mathrm{n}(x) > 0\}$, and $\Sigma = \Sigma_\pm \cup \Sigma_0 $.
We also denote by  $\mathrm{d}\sigma_x $ the Lebesgue measure on  ${\partial\Omega}$ and $\dd \mu = |\mathrm{n}(x) \cdot v| \dd v$.

The boundary condition considered in this paper is that the number density on the incoming to the domain is prescribed. More precisely, denoted by
$\gamma F$ be the trace of the number density (provided the trace can be defined), and let $\gamma_{\pm}F = \mathbf{1}_{\Sigma_{\pm}}\gamma F$. The so-called incoming boundary condition is that
\begin{align}\label{boundary-conditions-incoming}
\gamma_-F = Z\,,
\end{align}
where $Z \geq 0$ is a non-negative measurable function and satisfies
\begin{equation}\label{g-bound}
\int^T_0\int_{\Sigma_-} Z(1+ |v|^2 + |\log Z|)\,\bd v\bd \sigma_x\bd t < \infty\quad \text{for any}\quad\! T>0\,.
\end{equation}
The first question of this paper is to prove the existence of global-in-time renormalized solution of the Boltzmann equation with Grad's angular cutoff assumption on the cross-section and \eqref{Grad-cutoff-2} with the incoming data \eqref{boundary-conditions-incoming}-\eqref{g-bound}, and the initial data \eqref{IC-1}-\eqref{IC-bounds}.

\subsection{Incompressible Navier-Stokes limits}

The second part of this paper will focus on the incompressible Navier-Stokes limit. We start from the following scaled Boltzmann equation:
\begin{equation}\label{be-f}
\begin{cases}
\epsilon\partial_t F_\epsilon + \vdot F_\epsilon = \frac{1}{\epsilon}\Q(F_\epsilon,F_\epsilon)\,,\\
\gamma_- F_\epsilon = Z_\eps,\\
F_\epsilon(0,x,v) = F_\epsilon^0(x,v)\geq 0\,,
\end{cases}
\end{equation}
where $\eps>0$ is the Knudsen number. We take the Navier-Stokes scaling, i.e. the Boltzmann equation is scaled in the form of \eqref{be-f} and the solution is written as
\begin{align}
\label{ns-scaling}
F_\epsilon = \mm(1 + \epsilon g_\epsilon)\,.
\end{align}
Under this scaling, it can formally derive the incompressible NSF system with the corresponding boundary conditions, see \cite{bgl1991formal} and \cite{sone2007} for the derivations of the equations and boundary conditions respectively. More specifically, it can be shown formally that $g_\eps$ converges to $g$ as the Knudsen number $\eps\rightarrow 0$, and $g$ must have the form of the infinitesimal Maxwellian:
\begin{equation}
  g(t,x,v)= v\cdot \uu(t,x) + \theta(t,x)(\tfrac{|v|^2}{2}- \tfrac{5}{2})\,,
\end{equation}
where $(\uu,\theta)$ obeys the incompressible Navier-Stokes-Fourier (NSF) system
	\begin{align}
	\begin{cases}
	\partial_t \uu + \uu\cdot\nabla \uu + \nabla p - \nu \Delta \uu =0,\\
	\mathrm{div} \uu =0,\\
	\partial_t \theta + \uu \cdot  \nabla \theta -  k \Delta \theta = 0\,.
	\end{cases}
	\end{align}
Furthermore, to derive the boundary conditions for $(\uu,\theta)$, the incoming boundary data must have the form: $Z_\eps = \mm (1+\eps z_\eps)$, where ${z}_\eps$ satisfies formally:
\begin{equation}\label{BC-formal-con}
  z_\eps \rightarrow \rho^w + v\cdot \uu^w + \theta^w(\tfrac{|v|^2}{2}-\tfrac{3}{2})\,,
\end{equation}
then $(u,\theta)$ satisfies the boundary conditions: there exists two positive constants $\alpha, \beta$ with $\alpha + \beta >0$,
\begin{equation}\label{NSF-BC-1}
  \begin{cases}
    \alpha (\uu\!\cdot\! \mathrm{n}) - \theta  = \rho^w\,,\\
    \beta (\uu\!\cdot\! \mathrm{n}) + \theta  = \theta^w\,,\\
    \uu^{\mathrm{tan}}= (\uu^w)^{\mathrm{tan}}\,.
  \end{cases}
\end{equation}
The boundary conditions \eqref{NSF-BC-1} are Dirichlet conditions. In \eqref{NSF-BC-1}, the constants $\alpha\,,\beta$ are determined by the solvability of the linear kinetic boundary layer equation investigated by Bardos-Caflisch-Nicolaenko \cite{bcn1986}. In particular, if $(\rho^w\,, \uu^w\,, \theta^w)= (0,0,0)$, then $(\uu,\theta)$ satisfies the homogeneous Dirichlet boundary conditions, i.e. $\uu=0$ and $\theta=0$ on $\partial\Omega\,.$

To make the above formal analysis rigorous is the main concern of the second part of the current work. The justification of the NSF is the same as \cite{lm2010soft}, so we will omit the details in this paper and focus on the justification of the boundary conditions \eqref{NSF-BC-1}. To justify the non-homogeneous Dirichlet conditions \eqref{NSF-BC-1}, i.e. $(\rho^w\,, \uu^w\,, \theta^w)\neq (0,0,0)$ is a challenging problem. In this paper, we treat the homogeneous case. From the point view of formal convergence \eqref{BC-formal-con}, this needs the fluctuations of the incoming data $\hat{z}_\eps$ is very ``small" so that their limit is zero. We will characterize this smallness of the incoming data by the so-called incoming boundary relative entropy. We call the boundary data is ``well-prepared" if the incoming boundary relative entropy is of order $O(\eps^3)$, see \eqref{closetom}. The second main result of this paper is the justifications of NSF with homogeneous Dirichlet boundary for well-prepared incoming boundary data.

\section{Preliminaries and Main results}
\label{sec-notation}
In this section, we will introduce the definition of traces to solutions of transport equation, then some useful lemmas from \cite{mischler2000vlasovtrace,mischler2010asens} on weak-weak convergence.   As mentioned before, the solution $F$ only belongs to $L^1$ space. So its trace can not be defined in the usual way. In fact, the trace of $F$ can be defined in the weak sense or by employing characteristic line, for instance, see \cite{cercignani1992initial,hamdache1992initial,mischler2000vlasovtrace,mischler2010asens} and references therein.   If the solutions to transport equation are smooth, then their traces in weak sense are the same to these in usual sense. In this work, we mainly adopt Mischler's idea to define traces.

First, we introduce some notations.     $\mathcal{D}((0,T)\times\mathcal{O})$  is made up of   smooth function $\phi$ with compact support satisfying
\[ \phi(0, x, v) = \phi(T, x, v) = 0, \qquad \text{for all}~~ (x,v)\in \mathcal{O},  \]
\[ \phi(t, x, v)|_{\partial\Omega} = 0, \qquad \text{for all}~~ (t, v)\in (0,T)\times \mathbb{R}^3. \]
$\mathcal{D}([0,T]\times\bar{\Omega}\times\mathbb{R}^3)$ contains smooth functions which have compact support and may not vanish on the boundary. In the similar way, we can define $\mathcal{D}((0,T)\times\bar\Omega)$.

 Let $\mathbf{P}$  be   Leray   projection in $L^2(\Omega)$ onto its subspace  with divergence-free vector. Denote  by $\mathcal{L}$ the linear Boltzmann collision operator(linearized around $\mm$)      given by
\begin{align*}
\mathcal{L}g:= \int_{\mathbb{R}^3\times\mathbb{S}^{2}}(g - g' + g_* - g_*') b(v-v_*,\omega) \mm(v_*)\bd v_*\bd\omega.
\end{align*}
It is a Fredholm operator.  Its  kernel  space is spanned by linear independent vectors $1$, $v$ and $\frac{|v|^2 - 3}{2}$. Specially,
the kinetic momentum flux and heat flux \begin{align*}
\mathrm{A}(v) = v \otimes v -\tfrac{|v|^2}{3},~~\mathrm{B}(v) = v (\tfrac{|v|^2}{2} - \tfrac{5}{2})
\end{align*}
lies in the $\mathrm{Ker}^\perp(\mathcal{L})$. Thus there exists $\hat{\mathrm{A}}$　 and $\hat{\mathrm{B}}$ such that
\begin{align*}
\mathcal{L}\hat{\mathrm{A}} = {\mathrm{A}}, ~ \mathcal{L}\hat{\mathrm{B}} = {\mathrm{B}}.
\end{align*}
We write  $L^1(\mathcal{O};\dd v \dd x)$ as $L^1(\mathcal{O})$,  $L^1((0,T)\times\Sigma_\pm;\xmus)$ as $L^1((0,T)\times\Sigma_\pm)$ for short.

\begin{lemma}[Green Formula {\cite{mischler2010asens}}]
	\label{green-formula-Lp}
	Let $p\in[1, +\infty)$,  $ g \in L^\infty((0,T), L^p_{loc}({\mathcal{O}})$ and $ h \in L^1 ((0,T), L^p_{loc}({\mathcal{O}})$. Assume that $g$ and $h$ satisfy equation
	\begin{align*}
	\partial_t g + v \cdot \nabla_x g = h,
	\end{align*}
	in distributional sense. Then there exists  $\gamma g$ well defined on $(0,T) \times \Sigma$  which satisfies
	\[  \gamma  g \in L^1_{loc} \big([0,T]\times\Sigma, (\mathrm{n}(x)\cdot v)^2 \bd v \bd\sigma_x \dd t\big), \]
	and the following Green Formula
	\begin{align*}
	&\int_{0}^{T} \int_{\mathcal{O}} \big( \beta(g)(\partial_t \phi  + v \cdot \nabla_x \phi) + h \beta'(g)\phi\big) \mathrm{d}v \mathrm{d}x \mathrm{d}t \\
	& = \big[ \int_{\mathcal{O}} \phi(t,\cdot)\beta(g)(\tau, \cdot) \mathrm{d}v \mathrm{d}x \big]\vert_{0}^{T} + \int_{0}^{T} \int\int_{\Sigma} \phi \beta(\gamma g) \bd \mu \bd \sigma_x  \bd t,
	\end{align*}
	for    $\beta(\cdot) \in W^{1,\infty}_{loc}(\mathbb{R}_+)$ with $\sup_{x\ge 0}|\beta'(x)| < \infty$,   and  $\phi \in \mathcal{D}({[0,T]\times\bar{\Omega}\times\mathbb{R}^3})$.
\end{lemma}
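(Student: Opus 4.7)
The plan follows Mischler's strategy in \cite{mischler2000vlasovtrace, mischler2010asens} in three stages: renormalize, construct the trace locally via characteristics, and upgrade to a global Green identity by approximation.

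First I would renormalize. Since $g \in L^\infty_t L^p_{loc}$ and $h \in L^1_t L^p_{loc}$, a standard mollification in $(t,x)$ commutes with the transport operator $\partial_t + v \cdot \nabla_x$, so combined with the chain rule for a Lipschitz $\beta$ it yields that $\beta(g)$ satisfies
\[ \partial_t \beta(g) + v \cdot \nabla_x \beta(g) = \beta'(g)\, h \]
in $\mathcal{D}'((0,T)\times\mathcal{O})$. This reduces the problem to that of a bounded solution with an $L^1_{loc}$ source, where trace theory is cleanest.

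Next I would construct $\gamma g$. Using the $W^{2,\infty}$ extension of $\mathrm{n}$, I straighten $\partial\Omega$ in a tubular neighborhood so that locally the transport operator becomes $\partial_t + v \cdot \nabla_x$ plus an $L^\infty$ perturbation. For $v$ away from the grazing set $\Sigma_0$ we have $v \cdot \mathrm{n}(x) \neq 0$, so characteristics cross the boundary transversally and the ODE $\frac{d}{ds} g(s, x+sv, v) = h(s, x+sv, v)$ determines pointwise limits as the characteristic parameter approaches the boundary; these define $\gamma_\pm g$ on $\Sigma_\pm$. Testing against functions supported near $\partial\Omega$ and using the change of variable along characteristics produces one power of $|\mathrm{n}(x)\cdot v|$ from the Jacobian and another from localization, giving the local integrability of $\gamma g$ against $(\mathrm{n}(x)\cdot v)^2\, \mathrm{d}v\, \mathrm{d}\sigma_x\, \mathrm{d}t$.

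Finally, with $\gamma g$ in hand, I would establish the Green identity by a characteristic-adapted mollification producing smooth $g_n$ with $g_n \to g$ in $L^p_{loc}(\mathcal{O})$ and $\gamma g_n \to \gamma g$ in the $(\mathrm{n}\cdot v)^2$-weighted local $L^1$. For each $g_n$ the classical divergence theorem applied to $\phi\, \beta(g_n)$ on $(0,T)\times \Omega \times \mathbb{R}^3$ gives the stated formula exactly; the compact support of $\phi$ in $v$ handles velocity integration, and passage to the limit is by dominated convergence, using $\beta(g_n)$ uniformly bounded and $\beta'(g_n)\, h$ convergent in $L^1_{loc}$. The main obstacle is the trace convergence $\gamma g_n \to \gamma g$: this is not automatic from interior convergence, and one must either build it into the mollifier via characteristic coordinates, or alternatively take the Green identity itself as the definition of $\gamma g$ and verify consistency by duality, which is the approach developed in \cite{mischler2010asens}.
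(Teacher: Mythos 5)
This lemma is not proved in the paper at all: it is quoted verbatim from Mischler \cite{mischler2010asens}, and the only proof the paper supplies of anything in this family is the proof of Lemma \ref{green-formula-L-infty-defect} (the $L^\infty$ variant), which uses the inward-shifted mollifier $\rho_n(x,v)=\tfrac{1}{n^3}\rho(\tfrac{v}{n})\tfrac{1}{n^3}\rho(\tfrac{x}{n}-\mathrm{n}(x)\cdot\tfrac{2}{n})$, takes classical traces of $\rho_n * g$, and passes to the limit. Your outline --- renormalize, build the trace along non-grazing characteristics, then recover the Green identity by a characteristic-adapted (i.e.\ inward-shifted) mollification --- is exactly this strategy, so in approach you are aligned with both the cited source and the paper's own treatment of the $L^\infty$ case. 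Your accounting for the weight $(\mathrm{n}(x)\cdot v)^2$ (one factor of $|\mathrm{n}(x)\cdot v|$ from the Jacobian of the characteristic change of variables, one from localization near $\Sigma_0$) is also the right heuristic.

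The one genuine gap is the step you yourself flag and then leave open: the convergence $\gamma(\rho_n * g)\to\gamma g$. This is not a technicality one can defer to ``either build it into the mollifier or take the Green identity as the definition''; it is the entire content of the lemma, since without it the boundary term in the limiting identity is not identified. The way Mischler closes it is concrete: apply the classical Green identity to the \emph{difference} $\rho_n * g - \rho_m * g$, which solves the transport equation with source $\rho_n * h - \rho_m * h + r_n(g) - r_m(g)$, where the commutator remainders $r_n(g)\to 0$ in $L^1_{loc}$ by the DiPerna--Lions lemma. Testing with $\phi$ equal to (a renormalization of) $|\mathrm{n}(x)\cdot v|$ times a cutoff near $\partial\Omega$ shows that $\{\gamma(\rho_n * g)\}$ is Cauchy in $L^1_{loc}([0,T]\times\Sigma,(\mathrm{n}(x)\cdot v)^2\,\bd v\,\bd\sigma_x\,\bd t)$; the quadratic weight is precisely what absorbs the grazing set where the boundary flux degenerates. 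The limit defines $\gamma g$, and the Green identity for general $\beta$ and $\phi$ then follows by dominated convergence as you describe. Without this Cauchy argument your proof is an accurate map of the terrain but does not cross it.
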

\begin{remark}
	\label{remark-renormlized}
	According to the definition, we can find
	\[ \gamma \beta(F) = \beta (\gamma F).  \]
\end{remark}
While the solution belongs to $L^\infty$ space, its trace also belongs to the corresponding $L^\infty$ space. Besides, the trace also enjoys monotone properties.
\begin{lemma}[Green Formula of $L^\infty$]
	\label{green-formula-L-infty-defect}~\\
	(I), Let  $ g \in L^\infty((0,T)\times{\mathcal{O}})$ and  $ h \in L^1((0,T)\times\mathcal{O}) $. Assume that $g$ and $h$ satisfies equation
	\begin{align}
	\label{equation-defect}
	\partial_t g + v \cdot \nabla_x g = h ,
	\end{align}
	in distributional sense. Then there exists  $\gamma g \in L^1_{loc} \big([0,T]\times\Sigma, (\mathrm{n}(x)\cdot v)^2 \dd v \dd \sigma_x \dd t\big)$   which satisfies  the following Green Formula
	\begin{align*}
	&\int_{0}^{T} \int_{\mathcal{O}}   \beta(g)(\partial_t \phi  + v \cdot \nabla_x \phi)  + h \beta'(g) \phi \mathrm{d}v \mathrm{d}x \mathrm{d}t  \\
	& = \big[ \int_{\mathcal{O}} \beta(g)(\tau, \cdot) \phi \mathrm{d}v \mathrm{d}x \big]\vert_{0}^{T} + \int_{0}^{T} \int_{\Sigma} \beta(\gamma g) \mathrm{d}\mu \mathrm{d}\sigma_x  \mathrm{d}t,
	\end{align*}
	for all $\beta(\cdot ) \in W^{1,\infty}_{loc}(\mathbb{R}_+)$ with $\sup_{x\ge 0}|\beta'(x)| < \infty$ and   $\phi \in \mathcal{D}( {[0,T]\times\bar{\Omega}\times\mathbb{R}^3})$.
	\par
	(II), Moreover,  assume that there are $g_1$ and $g_2$, $h_1$ and $h_2$ such that
	\[ \partial_t g_1 + v \cdot \nabla_x g_1 = h_1, \]
	and
	\[  \partial_t g_2 + v \cdot \nabla_x g_2 = h_2, \]
	hold in distributional sense. If $  C_2 \ge g_1 \ge g_2 \ge 0$, then
	\[ C_2 \ge \gamma_\pm g_1 \ge \gamma_\pm g_2 \ge 0.\]
\end{lemma}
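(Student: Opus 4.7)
For part (I), the plan is to reduce directly to Lemma \ref{green-formula-Lp}. Because $\Omega$ is bounded, $g \in L^\infty((0,T)\times\mathcal{O})$ embeds into $L^\infty((0,T), L^1_{loc}(\mathcal{O}))$ and $h \in L^1((0,T)\times\mathcal{O})$ embeds into $L^1((0,T), L^1_{loc}(\mathcal{O}))$, so Lemma \ref{green-formula-Lp} with $p=1$ directly provides the trace $\gamma g$ in $L^1_{loc}([0,T]\times\Sigma,(\mathrm{n}(x)\cdot v)^2\,\dd v\,\dd\sigma_x\,\dd t)$ together with the renormalized Green formula. The only additional observation is that since $g\in L^\infty$ implies $\beta(g)\in L^\infty$, every term on both sides is absolutely convergent against any $\phi \in \mathcal{D}([0,T]\times\bar{\Omega}\times\mathbb{R}^3)$.

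For part (II), I would first reduce the chain $C_2 \ge \gamma_\pm g_1 \ge \gamma_\pm g_2 \ge 0$ to the single implication: \emph{if $G\ge 0$ a.e.\ on $\mathcal{O}$ and $\partial_t G+v\cdot\nabla_x G = H$ in the distributional sense with $G\in L^\infty$ and $H\in L^1$, then $\gamma_\pm G \ge 0$ a.e.\ on $\Sigma_\pm$.} Applying this implication to $G=g_2$, $G=g_1-g_2$ and $G=C_2-g_1$---each of which is a non-negative $L^\infty$ solution of a transport equation with $L^1$ forcing ($h_2$, $h_1-h_2$, $-h_1$ respectively)---together with the linearity $\gamma_\pm(G_1-G_2)=\gamma_\pm G_1 -\gamma_\pm G_2$ (which itself follows from part (I) applied to the difference by the uniqueness of the trace), yields all three desired inequalities.

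To prove the reduced implication, I would feed part (I) the renormalization $\beta(s):=\max(-s,0)$, which is globally Lipschitz with $|\beta'|\le 1$ (and admissible either directly or via a smooth mollification $\beta_\delta\to\beta$). Since $G\ge 0$ a.e., we have $\beta(G)\equiv 0$ and $\beta'(G)\equiv 0$ a.e., so the entire interior side of the Green formula vanishes; choosing $\phi\in\mathcal{D}([0,T]\times\bar{\Omega}\times\mathbb{R}^3)$ with $\phi(0,\cdot)=\phi(T,\cdot)=0$ also kills the time-slice contributions, leaving
\[
  \int_0^T\!\!\int_{\Sigma} \phi\,\beta(\gamma G)\,\dd\mu\,\dd\sigma_x\,\dd t = 0.
\]
Letting $\phi$ vary over non-negative test functions supported near $\Sigma_+$ (resp.\ $\Sigma_-$) then forces $\beta(\gamma_\pm G) = (\gamma_\pm G)^- = 0$, i.e.\ $\gamma_\pm G\ge 0$.

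The main obstacle I expect is the sign convention in the boundary integral of the kinetic Green formula: depending on how the factor $\mathrm{sgn}(\mathrm{n}\cdot v)$ is absorbed into $\dd \mu$ versus $\beta(\gamma g)$, one may need to split the boundary integral into its contributions on $\Sigma_+$ and $\Sigma_-$ and test separately against functions localized to each side before concluding pointwise non-negativity of $\gamma_+G$ and $\gamma_-G$. A secondary technicality is the non-smoothness of $\beta(s)=\max(-s,0)$, handled by smooth approximation together with dominated convergence using the $L^\infty$ bound on $G$ and the $L^1_{loc}$ bound on $\gamma G$ already delivered by part (I).
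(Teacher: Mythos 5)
Your part (I) matches the paper: the authors likewise do not reprove it but invoke Mischler's $L^p$ trace theorem (Lemma \ref{green-formula-Lp}), exactly as in your reduction. For part (II), however, you take a genuinely different route. The paper's proof mollifies the equation with a kernel $\rho_n$ translated into the domain along $-\mathrm{n}(x)$, so that $\rho_n * g_i$ is smooth up to the boundary and its \emph{classical} trace (a pointwise limit from inside) manifestly inherits the ordering $C_2\ge \gamma(\rho_n*g_1)\ge\gamma(\rho_n*g_2)\ge 0$; it then passes to the limit using the a.e.\ convergence $\gamma(\rho_n*g_i)\to\gamma g_i$ on $(0,T)\times\Sigma$ together with dominated convergence. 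You instead read off the sign of the trace from the Green formula itself, applied to the non-negative differences $g_2$, $g_1-g_2$, $C_2-g_1$ with $\beta$ a smoothing of $s\mapsto\max(-s,0)$, combined with linearity and uniqueness of the trace. Your route avoids the mollification machinery and the nontrivial convergence of approximate traces, but it leans harder on two features of part (I): that the boundary term involves a single, $\beta$-independent trace through the compatibility $\gamma\beta(g)=\beta(\gamma g)$ (Remark \ref{remark-renormlized}), and that the formula holds for $\beta$ defined on all of $\mathbb{R}$ rather than only on $\mathbb{R}_+$ as literally stated in (I) --- this last point matters, since before you know $\gamma G\ge 0$ the composition $\beta(\gamma G)$ is only meaningful for $\beta$ defined on $\mathbb{R}$ (the extension is harmless and is exactly the setting of Theorem \ref{theorem-boltzmann-approximate-linear}, but it should be stated). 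You also correctly anticipated the one genuinely delicate step, namely splitting $\int_\Sigma$ into its signed $\Sigma_+$ and $\Sigma_-$ contributions and localizing non-negative test functions away from the grazing set; with these points made explicit, both arguments are sound and yield the same conclusion.
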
	
\begin{proof}This lemma can be proved by the argument in \cite{mischler2000vlasovtrace,mischler2010asens}. Specially, $(I)$ is the same as the one in \cite{mischler2000vlasovtrace}.

Choosing the mollifer $\rho_\epsilon$ such that	
\[ \rho_n(x, v) = \tfrac{1}{n^3}\rho(\tfrac{v}{ n}) \tfrac{1}{n^3}\rho(\tfrac{x}{ n} - \mathrm{n}(x) \cdot \tfrac{2}{n}),\]
with $\mathrm{Supp}\rho \subset \{ x\in \mathbb{R}^3 \vert~~ |x| <1 \}$ and $\int_{\mathbb{R}^3} \rho(x) \bd x = 1$.
	
	 Indeed, applying    $\rho_{n}( x, v)$  to \eqref{equation-defect}, then
	\[  \partial_t \rho_{{n}}* g + v \cdot \nabla \rho_{{n}}*g=  \rho_{{n}}*h + r_{{n}}(g),   \]
	where $r_{{n}}(g) \to 0$ in $L^1((0,T)\times\Omega)$ as $n$ goes to infinity.
	
	Now for very $n \in \mathbb{N}^+$, $\rho_{n}*g$ is smooth. The trace of $ \rho_n * g$ on $\Sigma_\pm$ have a clear meaning in the usually sense.
	Indeed, for all $(x, v) \in \Sigma_\pm$,
	\[ \gamma( \rho_{n}*g)(t, x, v) = \lim\limits_{y \to x\atop \xi \to v} (\rho_{n}*g)(t, x, v),~~ (x, \xi ) \in \mathcal{O}.  \]
		So if $ C_2 \ge g_1 \ge g_2 \ge 0$, then
	\begin{align}
	\label{est-lemma-trace-l-infty-mono-trace}
	C_2\ge \gamma ( \rho_{n}*g_1 )(t, x, v)  \ge (\gamma  \rho_{n}*g_2 )(t, x, v),~~(x, v) \in \Sigma_\pm.
	\end{align}
	
		 Setting $ \gamma g_{{i}} = \gamma (  \rho_{n}*g_i ), r_{n,i}=r(g_i)(i=1,2)  $,    by Stokes formula, for any $ 0 < t \le T$
	\begin{align*}
	&\int_{0}^{t} \int_{\mathcal{O}} \bigg( \beta( \rho_{n} * g_i)(\partial_t \phi  + v \cdot \nabla_x \phi) +  r_{{n,i}} \beta'( \rho_{n} * g_i) \phi\bigg) \mathrm{d}v \mathrm{d}x \mathrm{d}\tau \\
	& = \big[ \int_{\mathcal{O}} \beta( \rho_{n} * g_i)(\tau, \cdot) \phi \mathrm{d}v \mathrm{d}x \big]\vert_{0}^{t_1} + \int_{0}^{T}\int_{\Sigma} \beta\big(\gamma (  \rho_{n}*g_i ) \big) \phi \dd \mu \dd \sigma_x  \dd \tau.
	\end{align*}

By the same argument in \cite{mischler2000vlasovtrace}, there exists $\gamma g \in  L^1_{loc} \big([0,T]\times\Sigma_\pm, (\mathrm{n}(x)\cdot v)^2 \bd v \bd \sigma_x \bd t$ such that
\[ \gamma  \rho_{n}* g \to \gamma   g,~ \text{in}~ L^1_{loc} \big([0,T]\times\Sigma, \dd \mu \bd \sigma_x \bd t. \]
This hints that
\[ \gamma  \rho_{n}* g \to \gamma   g, ~a.e. ~\text{on}~~(0,T)\times\Sigma. \]
By \eqref{est-lemma-trace-l-infty-mono-trace} and Lebesgue dominated convergence theorem, We complete the proof.
\end{proof}

Recalling that $\Q(F,F)$ does not belong to $L^1$ space,  $\beta'(F)\Q(F,F)$($\beta(\cdot ) \in C^{1}(\mathbb{R}_+)$ with $\sup_{x\ge 0}(1 + x)|\beta'(x)| < \infty$)   belongs to $L^1$ space. So we can only apply Lemma \ref{green-formula-Lp} to renormalized version of Boltzmann equation, namely the resulting equation obtained  by multiplying \eqref{BE} by $\beta'(F)$. So the trace is defined just for $\beta(F)$ other than $f$ itself. The following {\it r-convergence} is very useful while  recovering  $\gamma f$ from  $\gamma \beta(F)$.

Let $X$ and $Y$ be a separable and $\sigma-$compact topological space. We denote by $g^n \uparrow g$ that $g^n$ converges increasingly to $g$ in some function space to be clear nearby.
\begin{definition}[\cite{mischler2010asens}]
	\label{def-r-convergence}
	We say that  $\alpha$ is a renormalizing function if $\alpha \in C(\mathbb{R})$ is increasing and $ 0 \le \alpha(s) \le s$ for any $s \ge 0$. We say that $\{\alpha_m\}$ is renormalizing sequence if for any $m\in \mathbb{N}^+$,  $\alpha_m$ is a renormalizeing function, $\alpha_m(s) \le m$  and $\alpha_m \uparrow s$ for all $s \ge 0$ when $m \uparrow \infty$. Given any renormalizing sequence $(\alpha_m)_{m\in \mathbb{N}^+}$, we say that $Z_n$  r-converges(r-convergence) to $Z$ if there exists a sequence $\{\bar{\alpha}_m\}$ in $L^\infty(Y)$ such that
	\[ \alpha_m(Z_n) \rightharpoonup \bar{\alpha}_m,~~\text{in} ~L^\infty(Y);~~\text{and}~~ \bar{\alpha}_m \uparrow Z,~~a.e.   \text{in} ~~Y.\]
\end{definition}
We denote r-convergence by $Z_n\rcong Z$.
\begin{proposition}~
	\label{proposition-r-convergence}
	\begin{itemize}
		\item If $Z_n \rcong Z$, then $Z$ is independent of the renormalizing sequence $(\alpha_m)_{m\in\mathbb{N}^+}$. This hints that one can choose any renormalizing sequence as you like as long as it is a renormalizing sequence.
		\item If $ h^n \rightharpoonup h$ in $L^1(X)$, then $h^n \rcong h$ in $ X$. \cite[Lemma 2.7]{mischler2010asens}
	\end{itemize}
\end{proposition}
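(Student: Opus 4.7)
The statement has two parts, and I would treat them separately.

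For the uniqueness assertion (first bullet), suppose $Z_n \rcong Z$ via a sequence $(\alpha_m)$ and also via $(\tilde\alpha_k)$ with limits $Z$ and $\tilde Z$; the goal is $Z = \tilde Z$ a.e. My plan is to introduce the cross-truncations $\beta_{m,k}(s) := \min(\alpha_m(s), \tilde\alpha_k(s))$, which are continuous, nondecreasing, bounded by $\min(m,k)$, and satisfy $\beta_{m,k} \uparrow \alpha_m$ as $k \to \infty$ and $\beta_{m,k} \uparrow \tilde\alpha_k$ as $m \to \infty$. Banach--Alaoglu (after a diagonal extraction in $m,k$) produces weak-$*$ cluster points $\bar\beta_{m,k} \in L^\infty(Y)$ of $\beta_{m,k}(Z_n)$, and the pointwise inequalities pass to the weak-$*$ limit to yield $\bar\beta_{m,k} \le \bar\alpha_m$ and $\bar\beta_{m,k} \le \bar{\tilde\alpha}_k$ a.e. Pointwise monotonicity in $k$ (resp.\ $m$) transfers to $\bar\beta_{m,k}$, so the iterated suprema exist. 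The main and hardest step is to verify the equality $\sup_k \bar\beta_{m,k} = \bar\alpha_m$ a.e., which amounts to commuting the weak-$*$ limit in $n$ with the monotone limit in $k$; I would extract this from the identity $\alpha_m(Z_n) - \beta_{m,k}(Z_n) = (\alpha_m(Z_n) - \tilde\alpha_k(Z_n))_+$, whose integrand is uniformly bounded by $m$ and tends pointwise to $0$ as $k \to \infty$ since $\tilde\alpha_k(Z_n) \uparrow Z_n \ge \alpha_m(Z_n)$. Pairing against a fixed nonnegative $\phi \in L^1\cap L^\infty$, dominated convergence in $k$ and a diagonal argument absorb the gap. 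Once this and its symmetric counterpart are in hand, taking suprema in $m$ and $k$ gives $\sup_m \bar\alpha_m = \sup_k \bar{\tilde\alpha}_k$ a.e., i.e.\ $Z = \tilde Z$.

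For the second bullet, assume $h^n \weak h$ in $L^1(X)$ and fix a renormalizing sequence $(\alpha_m)$. Since $0 \le \alpha_m(h^n) \le m$, Banach--Alaoglu extracts weak-$*$ cluster points $\bar\alpha_m \in L^\infty(X)$. Testing the pointwise bound $\alpha_m(h^n) \le h^n$ against nonnegative $\phi \in L^1 \cap L^\infty$ and using $L^1$-weak convergence yields $\bar\alpha_m \le h$ a.e., and the monotonicity of $s\mapsto\alpha_m(s)$ in $m$ implies $\bar\alpha_m \le \bar\alpha_{m+1}$. The core difficulty is the reverse inequality $\sup_m \bar\alpha_m \ge h$. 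My plan is to estimate $h^n - \alpha_m(h^n)$ by splitting at a threshold $M$: on $\{h^n \le M\}$, Dini's theorem applied to the monotone pointwise convergence $\alpha_m(s) \uparrow s$ on the compact interval $[0,M]$ gives $s - \alpha_m(s) < \varepsilon$ uniformly for $m$ large, while on $\{h^n > M\}$ the Dunford--Pettis equi-integrability inherent in weak $L^1$-convergence controls $\int_{\{h^n>M\}} h^n < \varepsilon$ uniformly in $n$. Pairing against a nonnegative test function, sending first $n \to \infty$ and then $m \to \infty$, forces $\int (h - \bar\alpha_m)\phi \to 0$, which together with monotonicity yields $\bar\alpha_m \uparrow h$ a.e. Because this argument identifies the weak-$*$ limit $\bar\alpha_m$ uniquely (independent of the extracted subsequence), the full sequence $\alpha_m(h^n)$ converges weakly-$*$, and the r-convergence $h^n \rcong h$ follows.

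The main obstacle in both parts is the interplay between the weak-$*$ limit in $n$ and the monotone limit in $m$ (or $k$): weak-$*$ convergence and pointwise supremum do not commute in general, but the monotone structure built into the definition of a renormalizing sequence, combined with Dunford--Pettis equi-integrability in part two and Dini-type uniform convergence on compact sublevel sets, is precisely what allows the commutation in the regime we need.
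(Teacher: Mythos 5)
The paper offers no proof of this proposition at all---it is quoted verbatim from \cite[Lemma 2.7]{mischler2010asens}---so your argument has to stand on its own. Your first bullet is essentially sound: the cross-truncations $\beta_{m,k}=\min(\alpha_m,\tilde\alpha_k)$ and the interchange of iterated suprema form the right skeleton. The one step to tighten is the commutation of the limit in $n$ with the limit in $k$: dominated convergence in $k$ for each \emph{fixed} $n$ does not by itself allow you to pass the estimate through $\lim_{n}$. The clean fix uses an ingredient you already invoke in part two: since $\tilde\alpha_k$ is nondecreasing and $\alpha_m(Z_n)\le Z_n$, one has
$(\alpha_m(Z_n)-\tilde\alpha_k(Z_n))_+\le \alpha_m(Z_n)-\tilde\alpha_k(\alpha_m(Z_n))\le\sup_{0\le s\le m}\bigl(s-\tilde\alpha_k(s)\bigr)$,
and Dini's theorem on $[0,m]$ makes the right-hand side tend to $0$ as $k\to\infty$ \emph{uniformly in} $n$. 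With that, the first bullet is complete.

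The genuine gap is the last sentence of your second part. Your Dini-plus-Dunford--Pettis estimate correctly shows that \emph{any} family of weak-$*$ cluster points $(\bar\alpha_m)$ of $(\alpha_m(h^n))_n$ is nondecreasing, bounded by $h$, and satisfies $\sup_m\bar\alpha_m=h$ a.e.; but this pins down only the supremum, not the individual $\bar\alpha_m$, so it does not force the full sequence $\alpha_m(h^n)$ to converge weak-$*$. Indeed it cannot: on $(0,1)$ take $h^n=2\,\mathbf{1}_{A_n}$ for $n$ even and $h^n=4\,\mathbf{1}_{B_n}$ for $n$ odd, with $\mathbf{1}_{A_n}\rightharpoonup\tfrac12$ and $\mathbf{1}_{B_n}\rightharpoonup\tfrac14$ weak-$*$ in $L^\infty$; then $h^n\rightharpoonup 1$ weakly in $L^1$, while $\min(h^n,1)$ has the two distinct weak-$*$ cluster points $\tfrac12$ and $\tfrac14$. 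Hence, with Definition \ref{def-r-convergence} read literally (full-sequence convergence of $\alpha_m(Z_n)$ for each $m$), the second bullet holds only after extraction of a subsequence. That weaker form is all that is actually used in Theorem \ref{theorem-boundary}, where the weak limits $F_{\gamma,j}$ are supplied in advance by a prior extraction; so you should either state the conclusion up to subsequences, or recast it as: if $\alpha_m(h^n)\rightharpoonup\bar\alpha_m$ weak-$*$ for every $m$ and $h^n\rightharpoonup h$ weakly in $L^1$, then $\bar\alpha_m\uparrow h$ a.e. --- which is exactly what your estimate proves and exactly what the paper needs.
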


Now, we introduce  the definition of renormalized solutions to Boltzmann equation with incoming boundary condition.
\begin{definition}
	\label{def-renormalized-cutoff}
	Let  $\beta(x)\in C^1(\mathbb{R}_+)$ satisfy   $ \beta'(x) \ge 0, x \ge 0$ and $  \sup_{x\ge 0} ( 1 +x)\beta'(x)  < \infty $.  Assume that the cross section   satisfies \eqref{Grad-cutoff-1} and \eqref{Grad-cutoff-2}.  A nonnegative function
	\[ F \in C\big(\mathbb{R}_+, \mathcal{D}'(\mathcal{O})\big) \cap L^\infty(\mathbb{R}_+; L^1((0,T)\times\mathcal{O}; \mathrm{d}v \mathrm{d}x )\big)\]
	is  a {\it renormalized solution} to initial boundary problem  of Boltzmann equation \eqref{BE} with initial datum \eqref{IC-bounds} and boundary condition \eqref{g-bound} in the distributional sense: if
	\[ \beta'(F)\Q(F,F) \in L^1((0,T)\times\mathcal{O}; \mathrm{d}v \mathrm{d}x \mathrm{d}s)  \]	
	and  there exist traces $\gamma_+ F   \in L^1((0,T)\times\Sigma; \mathrm{d}\mu \mathrm{d}x \mathrm{d} s)$  such that
	
	\begin{align}
	\label{est-definition-trace-of-renormalized-cutoff}
	\begin{split}
	&\int_{0}^{T} \int_{\mathcal{O}} \big( \beta(F)(\partial_t \psi  + v \cdot \nabla_x \psi) + \Q(F, F)\beta'(F) \psi\big) \mathrm{d}v \mathrm{d}x \mathrm{d}t \\
	& =  \int_{\mathcal{O}} \beta(F)(T, \cdot) \psi(T)  \mathrm{d}v \mathrm{d}x  - \int_{\mathcal{O}} \beta(F_0)\psi(0)  \mathrm{d}v \mathrm{d}x \\
	& + \int_{0}^{T}  \int_{\Sigma_+} \beta(\gamma_+ F) \psi \mathrm{d}\mu \mathrm{d}\sigma_x \mathrm{d}t - \int_{0}^{T}  \int_{\Sigma_-} \beta(Z) \psi \mathrm{d}\mu \mathrm{d}\sigma_x \mathrm{d} t,
	\end{split}
	\end{align}
	for  any test function $\psi \in \mathcal{D}([0,T]\times \bar{\Omega}\times\mathbb{R}^3)$.
\end{definition}

\subsection{Existence of renormalized solutions}

The relative entropy $ H(F | \mathrm{M} )$ is defined as
\[ H(F | \mathrm{M} ) = \int_\mathcal{O} h(F/\mathrm{M}) \mm  \bd v \bd x,   \]
where
$$ h(x) = x\log x - x + 1\,,\quad\mbox{for}\quad\! x \ge 0\,.$$
We also denote by $\mathcal{D}(F)$ the {\it H-dissipation}
\[\small 4\mathcal{D}(F) = \int_\Omega \int_{\mathbb{R}^3 \times\mathbb{R}^3}\int_{\mathbb{S}^2}  B(v-v_*, \omega)(F'F_*' - FF_*)\log \tfrac{F'F_*'}{FF_*} \bd \omega \bd v \bd v_* \bd x .\] Our main result is as follows.
\begin{theorem}
	\label{main-result-cut}
	Under the assumption \eqref{Grad-cutoff-1} and \eqref{Grad-cutoff-2} on cross section, if the initial datum $F_0$ satisfies \eqref{IC-1} and the boundary condition $z$ satisfies \eqref{g-bound},
	then  the initial-boundary problem to Boltzmann equation \eqref{BE}  admits a global renormalized solution $F \in L^1 \cap L \log L$.

	Furthermore, $F$ has the following properties:
	\begin{itemize}
		
			\item Local   conservation law of mass:
		\begin{align}
		\partial_t \int_{\mathbb{R}^3} F(t)\mathrm{d} v + \nabla \cdot \int_{\mathbb{R}^3} F(t) v \mathrm{d} v = 0,~ \text{in}~~ \mathcal{D}'((0,T)\times\Omega).
		\end{align}
		
		\item Estimate of traces
		\begin{align}
		\label{est-theorm-exi-trace-1}
		\gamma_\pm F \in L^1\big((0,T)\times\Sigma_\pm; (1+ |v|^2)\dd \mu \mathrm{d}\sigma_x \bd s\big), ~ \text{for ~ all~} T>0,
		\end{align}
		and
		\begin{align}
		\label{est-theorem-exi-trace-2}
		\int_{0}^{T} \int_{\Sigma_\pm}\gamma_\pm F |\log \gamma_\pm F| \mathrm{d} \mu \mathrm{d}\sigma_x \mathrm{d}s < +\infty,~\text{for ~ all~} T>0.
		\end{align}
		\item Commutative properties:
		As for the local  conservation law of mass, similar to Lemma \ref{green-formula-Lp},  we can use the Green formula to define the trace of $\int_{\mathbb{R}^3} v F\mathrm{d} v$ on $\partial\Omega$,   Denoting it by $\gamma_x(\int_{\mathbb{R}^3} v F\mathrm{d} v)$.  Moreover
		\begin{align*}
		\mathrm{n}(x) \cdot  \gamma_x(\int_{\mathbb{R}^3} v F\mathrm{d} v)  = \mathrm{n}(x) \cdot  \int_{\mathbb{R}^3} v \gamma F\mathrm{d} v.
		\end{align*}
		This means that the trace operator $\gamma$ is commutative with integral operator $\int$. This is because that the trace of solutions enjoys the full estimate.
		
		\item Local  conservation law of momentum: There is a distribution-value matrix $\mathbf{W}$ belonging to $\mathcal{D}'((0,T)\times\Omega)$   such that
		\begin{align*}
		\partial_t \int_{\mathbb{R}^3} v F(t)\mathrm{d}v + \nabla \cdot   \int_{\mathbb{R}^3} v\otimes v F(t)  \mathrm{d}v  + \nabla \cdot \mathbf{W}  =0 ,~~\text{in}~~~ \mathcal{D}'((0,T)\times\Omega).
		\end{align*}
		\item Global  conservation law of momentum:
		\begin{align}
		\begin{split}
		&\int_{\mathcal{O}} F(t) v \mathrm{d}v \mathrm{d}x + \int_{0}^{t} \int_{\Sigma_+} v \gamma_+ F(s) \dd \mu \mathrm{d}\sigma_x \mathrm{d}s  \\
		&  = \int_{\mathcal{O}} F_0  v \mathrm{d}v \mathrm{d}x + \int_{0}^{t} \int_{\Sigma_-} v  Z(s) \dd \mu \mathrm{d}\sigma_x \mathrm{d}s, \qquad t \le T.
		\end{split}
		\end{align}
		
		\item Global energy  inequality:
		\begin{align}
		\begin{split}
		&\int_{\mathcal{O}} F(t) |v|^2  \mathrm{d}v \mathrm{d}x + \int_{0}^{t} \int_{\Sigma_+} |v|^2 \gamma_+ F(s) \dd \mu \mathrm{d}\sigma_x \mathrm{d}s  \\
		&  \le \int_{\mathcal{O}} F_0 |v|^2 \mathrm{d}v \mathrm{d}x + \int_{0}^{t} \int_{\Sigma_-}  Z(s) |v|^2 \dd \mu \mathrm{d}\sigma_x \mathrm{d}s, \qquad t \le T.
		\end{split}
		\end{align}	
		
		\item Global entropy inequality:
		\begin{align}
		\label{est-theorem-main-entropy}
		\begin{split}
		& H(F| \mathrm{M} )(t) + \int_{0}^{t}\int_{\Sigma_+}h(\gamma_+ F/\mathrm{M} )(s) \dd \mu \mathrm{d}\sigma_x\mathrm{d}s +  \int_{0}^t \mathcal{D}(F)(s) \mathrm{d}s  \\
		&\le  H(F_0| \mathrm{M} )  + \int_{0}^{t}\int_{\Sigma_-}h(Z| \mathrm{M} )(s) \dd \mu \mathrm{d}\sigma_x\mathrm{d}s,\qquad t \le T.
		\end{split}
		\end{align}
	\end{itemize}
\end{theorem}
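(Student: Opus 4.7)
\textbf{Proof proposal for Theorem \ref{main-result-cut}.} The plan is to follow the overall strategy of \cite{mischler2010asens}, but to exploit the fact that for the incoming boundary the trace on $\Sigma_-$ is prescribed, which simplifies trace compactness and yields a full (not only fluctuation-type) estimate on $\gamma_+F$. First I would set up an approximation scheme: truncate the collision kernel by $b_m = b\wedge m$, truncate the initial data to $F_0^m = F_0\wedge m$ and the incoming data to $Z^m = Z\wedge m$, and convolve lightly in $(x,v)$ to obtain smooth nonnegative data. For fixed $m$ the bounded-kernel Cauchy problem with prescribed $\gamma_-F^m = Z^m$ admits a unique global $L^1\cap L^\infty$ mild solution by characteristics (one may use a Kaniel--Shinbrot style monotone iteration, starting from $0$ and from a large Maxwellian barrier).

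The next step is to derive a priori bounds that are uniform in $m$. Multiplying the equation by $1$, $v$, $|v|^2$ and $\log F^m$ and using the Green formulas in Lemmas \ref{green-formula-Lp} and \ref{green-formula-L-infty-defect} produces mass, momentum, energy and entropy identities. Because $\gamma_-F^m = Z^m$ is known and controlled by \eqref{g-bound}, the entropy identity gives at once
\begin{align*}
H(F^m|\mm)(t) + \int_0^t\!\!\int_{\Sigma_+} h(\gamma_+F^m/\mm)\,\xmus + \int_0^t \mathcal{D}(F^m)\bd s \le H(F_0|\mm) + \int_0^t\!\!\int_{\Sigma_-} h(Z/\mm)\,\xmus,
\end{align*}
together with analogous mass and energy bounds on $\gamma_+F^m$ (with the $|v|^2$ moment on the incoming side playing the role previously reserved for the Darroz\`es--Guiraud quantity). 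By Dunford--Pettis this yields weak $L^1$ compactness of $F^m$ and of $\gamma_\pm F^m$ with the right moments, and by the velocity averaging lemma of DiPerna--Lions the moments $\int\varphi(v) F^m\bd v$ are strongly compact in $L^1_{loc}$. Combined with the r-convergence machinery of Definition \ref{def-r-convergence} and Proposition \ref{proposition-r-convergence}, one extracts $F^m\rcong F$ and $\gamma_\pm F^m\rcong \gamma_\pm F$.

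The third step is to pass to the limit in the renormalized formulation \eqref{est-definition-trace-of-renormalized-cutoff}. Choosing $\beta_\delta(s) = \log(1+\delta s)/\delta$, the DiPerna--Lions argument (using the H-dissipation to handle the large-value set of $\beta'(F^m)\Q(F^m,F^m)$) gives equi-integrability of the renormalized collision term, and the weak limit is identified via the strong compactness of velocity averages. For the boundary terms, the incoming side is trivial since $\beta(Z^m)\to \beta(Z)$ strongly in $L^1$, while the outgoing side is handled by the entropy bound on $\gamma_+F^m$, which supplies the $L\log L$ and $(1+|v|^2)$ moments needed for weak compactness and for r-convergence of $\beta(\gamma_+F^m)$. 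This yields the renormalized equation for $F$ together with the trace estimates \eqref{est-theorm-exi-trace-1}--\eqref{est-theorem-exi-trace-2}. The global momentum identity follows by taking $\beta = \mathrm{id}$ tested against $v$ and using that both sides of the Green formula converge (here the incoming case gives equality, not inequality, because the $|v|$ moment of $\gamma_+F^m$ is uniformly controlled), while the global energy and entropy inequalities follow by Fatou and weak lower semicontinuity of $H$, $h$ and $\mathcal{D}$. The local momentum law with defect $\mathbf{W}$ is obtained by writing $\int v\otimes v F^m\bd v = \int v\otimes v F\bd v + \mathbf{W}^m$ and extracting a distributional limit of $\mathbf{W}^m$, possible because only uniform $L^\infty_t L^1_{x,v}$ control on $|v|^2 F^m$ is available.

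The main obstacle is the commutative property, which is the novel point. The strategy is: local conservation of mass gives $\partial_t\rho + \nabla_x\cdot J = 0$ in $\mathcal{D}'$ with $\rho = \int F\bd v$ and $J = \int v F\bd v$, both belonging to $L^\infty_t L^1_x$ thanks to the full trace estimate. Applying Lemma \ref{green-formula-Lp} to this transport equation with $p=1$ and $v$-variable removed yields a trace $\gamma_x J$ on $\partial\Omega$. On the other hand, $v\cdot\gamma F$ is well-defined as an $L^1((0,T)\times\partial\Omega;\bd\sigma_x\bd t)$-valued object thanks to \eqref{est-theorm-exi-trace-1}, and mollifying $F^m$ by the kernel used in the proof of Lemma \ref{green-formula-L-infty-defect} one checks that $\mathrm{n}(x)\cdot\int v \gamma (\rho_n*F^m)\bd v = \mathrm{n}(x)\cdot\gamma_x\int v (\rho_n*F^m)\bd v$ pointwise, because both sides are classical traces of smooth functions. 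Passing $n\to\infty$ (using Lebesgue dominated convergence justified by the $(1+|v|^2)$-moment control) and then $m\to\infty$ (using r-convergence of the traces together with the strong convergence of $Z^m$ on $\Sigma_-$) transfers the identity to the limit. The fact that one controls the full trace, not merely its fluctuation around a boundary average, is what makes this commutation possible and distinguishes the incoming case from the Maxwell case.
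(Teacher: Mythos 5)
Your overall architecture coincides with the paper's: approximate system with truncated kernel, uniform mass/momentum/energy/entropy bounds (which, for incoming data, control the \emph{full} outgoing trace rather than only its fluctuation), Dunford--Pettis plus velocity averaging, the DiPerna--Lions/Lions interior stability theorem, r-convergence to identify the boundary trace, and Fatou/lower semicontinuity for the global inequalities. The differences (Kaniel--Shinbrot iteration with $b\wedge m$ instead of the DiPerna--Lions normalization $(1+\tfrac1n\int F\,\bd v)^{-1}$ with the relative-velocity cutoff; a mollification argument for the commutation property instead of the paper's direct comparison of the two Green formulas applied to the local mass law tested against $\phi_1(t,x)$ not vanishing on $\partial\Omega$) are legitimate variants and do not change the substance.

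There is, however, one step where your sketch as written would not close, and it is precisely the crux of the boundary argument. You propose to renormalize with $\beta_\delta(s)=\log(1+\delta s)/\delta$ and then invoke ``r-convergence of $\beta(\gamma_+F^m)$'' to pass to the limit in the outgoing boundary term. But $\beta_\delta$ is unbounded, so it is not a renormalizing sequence in the sense of Definition \ref{def-r-convergence} (which requires $\alpha_m(s)\le m$), the monotonicity statement in Lemma \ref{green-formula-L-infty-defect}(II) does not apply to $\beta_\delta(F^m)$, and weak $L^1$ convergence $\gamma_+F^m\rightharpoonup F_\gamma$ does not imply $\beta(\gamma_+F^m)\rightharpoonup\beta(F_\gamma)$ for nonlinear $\beta$. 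The paper's resolution is a double limit built on the \emph{bounded} concave family $\beta_j(s)=\tfrac{js}{j+s}$: for fixed $j$ one passes $n\to\infty$ and uses the $L^\infty$ Green formula to identify the weak-$*$ limit of $\beta_j(\gamma_+F^n)$ as $\gamma_+\beta_j(F)=:F_{\gamma,j}$; Lemma \ref{green-formula-L-infty-defect}(II) gives $F_{\gamma,j}\uparrow$; and Proposition \ref{proposition-r-convergence} (weak $L^1$ convergence implies r-convergence, and the r-limit is independent of the renormalizing sequence) forces $F_{\gamma,j}\uparrow F_\gamma$, after which a second application of dominated convergence in $j$ yields the Green identity for every admissible $\beta$. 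You should either replace $\beta_\delta$ by such a bounded family for the trace identification (keeping $\beta_\delta$, if you wish, only for the equi-integrability of the interior collision term), or supply an independent argument showing that the weak limit of $\beta(\gamma_+F^m)$ equals $\beta(F_\gamma)$; without one of these the outgoing boundary term in \eqref{est-definition-trace-of-renormalized-cutoff} is not identified.
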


\begin{remark}
	\label{remark}
	First, compared to Maxwell reflection boundary condition,  the estimates \eqref{est-theorm-exi-trace-1} and \eqref{est-theorem-exi-trace-2} contain full informations of incoming set and out going set other than partial estimates.  Furthermore, as a consequence, the trace operator $\gamma$ is commutative with the integral operator.

\end{remark}

\begin{remark}
	This result also works for   unbounded domain case. While   on the unbounded domain, the weight $|x|^2$ are necessary.  Besides, all these result are still correct in $\mathbb{R}^n$, $n\ge 2$.
\end{remark}
Now we state the difficulty and strategy. As mentioned before, the solution obtained in Theorem \ref{main-result-cut} only makes sense in the renormalized sense, namely $f$ is a solution to  the renormalized version of \eqref{BE}, namely
\[ \partial_t \beta(F) + v \cdot \nabla_x \beta(F) = \beta'(F)\Q(F,F).  \]
Of course, we can use Lemma \ref{green-formula-Lp} to get the existence of traces. Noticing that $\Q(F,F)$ does not belong to $L^1$ space, while we only have $\beta'(F)\Q(F,F)$ in $L^1$ space. So we can only use Lemma \ref{green-formula-Lp} to the renormalized version of \eqref{BE}, namely we get the existence of $\gamma_\pm \beta(F)$. It is not  easy  to recover the trace of $F$ satisfying Theorem \ref{main-result-cut} from the trace of $\beta(F)$ while $F$ only belongs $L^1(\mathcal{O})$. Furthermore, since the trace of $F$ on $\Sigma_-$ is fixed, the main goal is to find some $\gamma_+ F$ satisfying  \eqref{est-definition-trace-of-renormalized-cutoff}. Motivated by \cite{diogo2012,mischler2010asens}, we  choose a sequence of $\{\beta_j\}$ with $ \beta_j(x) = \frac{jx}{ j + x} $ to fulfill this. By Lemma \ref{green-formula-Lp}, we can get $\gamma_\pm \beta_j(F)$, the trace of $\beta_j(F)$. For any $\beta$ satisfying the assumption in Def. \ref{def-renormalized-cutoff},
\[ \beta(\beta_j(f)) \to \beta(F),~~j \to \infty, \text{in}~ L^1(\mathcal{O}),  \]
we will show that $(\gamma_+ \beta_j(F))$ is a monotone sequence and thus there exists some $\gamma_+ F$ such that
\[  \gamma_+ \beta_j(F) \uparrow \gamma_+ F,~~\text{on}~~\mathcal{O}, \]
and $\gamma_+ F$  satisfies \eqref{est-definition-trace-of-renormalized-cutoff}.
  Moreover we still find that the trace operator are commutative with integration operator, namely Remark \ref{remark}. It  is unknown whether Remark \ref{remark} is    true for the Maxwell reflection boundary conditions, \cite{mischler2010asens}. This is a new discovery.

\subsection{  Navier-Stokes limit}
We define
\[ G_\epsilon^0 = F_\epsilon^0/\mm,~~G_\epsilon  = F_\epsilon/\mm . \]
Noticing that if the kernel satisfy assumptions \eqref{lm-assump-1} to \eqref{lm-assump-5}, then it   satisfies \eqref{Grad-cutoff-1} and \eqref{Grad-cutoff-2}. The existence of renormalized solutions (including their traces) to scaled Boltzmann equation \eqref{be-f} is ensured by Theorem \ref{main-result-cut}. Indeed, there exists a family of renormalized solutions $F_\eps \in L \log L$ satisfying
		\begin{align}
\label{est-f-eps-entropy}
\begin{split}
& \eps H(G_\eps )(t) + \int_{0}^{t}\int_{\Sigma_+}h(\gamma_+ G_\eps ) \dd \mu \mathrm{d}\sigma_x\mathrm{d}s + \tfrac{1}{\eps } \int_{0}^t \mathcal{D}(F_\eps) \mathrm{d}s \\
& \le  \eps  H(G_\eps^0 ) + \int_{0}^{t}\int_{\Sigma_-}h(z_\eps ) \dd \mu \mathrm{d}\sigma_x\mathrm{d}s.
\end{split}
\end{align}
\begin{theorem}
	\label{main-result}
	Under the assumptions from \eqref{lm-assump-1} to \eqref{lm-assump-5} on cross section, for every $\eps>0$, let $F_\epsilon$ be DiPerna-Lions renormalized solutions to Boltzmann equation \eqref{be-f}.  Under the Navier-Stokes scaling \eqref{ns-scaling}, let  \begin{align}
	\label{be-f-initial-eps}
	C_0 = \sup_{\epsilon >0} \frac{1}{\epsilon^2}\int_{\Omega}\int_{\mathbb{R}^3}h(G_\epsilon^0)\mm \mathrm{d} v \mathrm{d}x < +\infty,
	\end{align}
	and $z_\eps$ be close to $\mm$ in the relative entropy sense:
	\begin{align}
	\label{closetom}
	C_1 = \sup_{\epsilon >0}\big( \frac{1}{\eps^3}\int_{0}^{+\infty}\int_{\Sigma_-}h(z_\eps ) \dd \mu  \mathrm{d}\sigma_x\mathrm{d}s \big) < + \infty.
	\end{align}
Assume that for some $(\mathrm{u}_0, \theta_0) \in L^2(\Omega)$, $ \mathbf{P} \big( \tfrac{1}{\epsilon}\int v F_\epsilon^0 \dd v \big) $ and $\tfrac{1}{\epsilon} \int (\tfrac{|v|^2}{5} - 1) (f^0_\epsilon - \mm)  \dd v$ converges to $\mathrm{u}_0$ and $\theta_0$ respectively in the sense of distribution.
	Then, $g_\eps$  is weakly compact in $L_{loc}^1(\dd t\dd x; L^1(( 1 + |v|^2)\mm \dd v)$. Moreover, for every limiting point $g$, it is an infinitesimal Maxwellian with form
	\begin{align}
	\label{limit-g}
	g(t,x,v) = \mathrm{u}(t,x) \cdot v + \theta(t,x)\tfrac{|v|^2 - 5}{2}  ,
	\end{align}
	where  $(\uu,\theta) \in L^\infty((0,\infty); L^2(\Omega))$
	is a  {\it Leray }solution  to  NSF system with initial data
	{ \begin{align*}
		\uu(0,x) = u_0, ~~ \theta(0,x) = \theta_0(x),
		\end{align*}
		and  boundary conditions
		\begin{align*}
		\mathrm{u}|_{\partial\Omega} = 0,~~~~ \theta|_{\partial\Omega} = 0.
		\end{align*}
		Besides,  the viscosity coefficient $\nu$ and  thermal conductivity $k$ are defined as follows
		\begin{align*}
		\nu = \tfrac{1}{10} \int_{\mathbb{R}^3}   \mathrm{A}:\hat{\mathrm{A}}  \mm \dd v ,~~k = \tfrac{2}{15} \int_{\mathbb{R}^3}   \mathrm{B}:\hat{\mathrm{B}}  \mm \dd v .
		\end{align*} }
	and for any $t>0$, $u$ and $\theta$ satisfy the following energy estimates
	\begin{align}
	\label{est-u-theta-f}
	\int_\Omega \big(  \tfrac{1}{2}|u|^2 + \tfrac{5}{4} |\theta|^2\big)(t,x)\dd x + \int_0^t \int_\Omega\big( \nu |\nabla u|^2 + \tfrac{5k}{2} |\nabla \theta|^2 \big)(s,x) \dd x \dd s\le C_0 + C_1.~
	\end{align}	
Furthermore, for every subsequence $g_{\eps_k}$ of $g_\eps$ converging to $g$ in $L_{loc}^1(\dd t\dd x; L^1(( 1 + |v|^2)\mm \dd v)$ space as $\eps_k \to 0$, it also satisfies
\begin{align*}
\mathbf{P}\la v g_{\epsilon_k} \ra \to \uu,~~\la (\tfrac{|v|^2}{5} - 1) \tilde{g}_\eps   \ra \to \theta,~~ \text{in}  ~~ C([0,\infty)\,,  \mathcal{D}'(\mathbb{R}^3)).
\end{align*}		

\end{theorem}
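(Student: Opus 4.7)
The plan is to adapt the Bardos--Golse--Levermore (BGL) program as developed in \cite{lm2010soft} for the interior analysis, while using the trace estimates from Theorem \ref{main-result-cut} together with the improved boundary entropy bound \eqref{closetom} to pin down the homogeneous Dirichlet conditions. First I would exploit the scaled entropy inequality \eqref{est-f-eps-entropy}: dividing by $\eps^2$ and using \eqref{be-f-initial-eps} and \eqref{closetom}, one obtains uniform-in-$\eps$ bounds on $\tfrac{1}{\eps^2}H(G_\eps\vert\mm)$, on $\tfrac{1}{\eps^4}\mathcal D(F_\eps)$, on the outgoing boundary entropy $\tfrac{1}{\eps^2}\int_0^T\!\int_{\Sigma_+}h(\gamma_+ G_\eps)\dd\mu\dd\sigma_x\dd t$, and on the incoming boundary entropy at the stronger order $\tfrac{1}{\eps^3}$. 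The classical Young--type entropy inequality then gives $L^1_{\mathrm{loc}}(\dd t\dd x; L^1((1+|v|^2)\mm\dd v))$ weak compactness of $g_\eps = (G_\eps-1)/\eps$; the dissipation bound forces the weak limit $g$ to lie in $\ker\mathcal L$, so $g$ has the infinitesimal Maxwellian form \eqref{limit-g} with $\rho+\theta=0$ (incompressibility/Boussinesq), and $\uu$ is divergence-free after passing to the limit in the local conservation of mass.

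Next I would pass to the limit in the local conservation laws using the standard moment method: multiply the renormalized equation by the collision invariants $1, v, \tfrac{|v|^2-5}{2}$ (composed with a suitable truncation $\beta_j$ as in the proof of Theorem \ref{main-result-cut}) and split the flux into its kinetic part plus a remainder controlled by $\mathcal D(F_\eps)^{1/2}$. Following \cite{lm2010soft}, the nonlinear flux terms $\la v\otimes v\, g_\eps^2\ra$ and $\la v(|v|^2/2-5/2)g_\eps^2\ra$ are handled by the compactness of $\mathbf P\la v g_\eps\ra$ and $\la(|v|^2/5-1)g_\eps\ra$ (velocity averaging plus the improved dispersion estimates for soft potentials in that paper), which eliminates fast acoustic oscillations in the bulk. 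The viscosity $\nu$ and conductivity $k$ emerge from inverting $\mathcal L$ on $\mathrm A$ and $\mathrm B$ in the usual way.

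The key new step is the boundary analysis. Using the commutative property established in Theorem \ref{main-result-cut} (i.e.\ $\mathrm n\cdot\gamma_x\la vF\ra=\mathrm n\cdot\la v\gamma F\ra$) and the improved trace estimates \eqref{est-theorm-exi-trace-1}--\eqref{est-theorem-exi-trace-2}, I would first write $\gamma_\pm g_\eps=(\gamma_\pm G_\eps-1)/\eps$ and control it in a boundary Orlicz space through the relative entropy. Because $\tfrac{1}{\eps^3}\int_0^\infty\!\int_{\Sigma_-}h(z_\eps)\dd\mu\dd\sigma_x\dd t$ is bounded, one gets $\gamma_- g_\eps=O(\eps^{1/2})$ in $L^2_{\mathrm{loc}}(\dd\mu\dd\sigma_x\dd t)$, so the incoming trace limit vanishes; the outgoing trace is then controlled by Darroz\`es--Guiraud--type computations and the entropy dissipation. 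Combining the vanishing trace of the infinitesimal Maxwellian $\gamma g=v\cdot\gamma\uu+\gamma\theta(|v|^2/2-5/2)=0$, projecting onto odd and even parts in $v$, yields $\gamma\uu=0$ and $\gamma\theta=0$ on $\partial\Omega$, which is the homogeneous Dirichlet condition. The energy estimate \eqref{est-u-theta-f} follows by passing to the limit in the entropy inequality and using the lower semicontinuity of $H(\cdot\vert\mm)$ together with the dissipation identity that produces the viscous and thermal terms.

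The hard part will be the rigorous justification that the boundary entropy at order $\eps^3$ translates into the vanishing of the trace of the limiting fluid variables, while simultaneously matching this information with the weak-weak convergence in the interior. Concretely, one must avoid appealing to a strong trace theorem for $g_\eps$ (which is unavailable in $L\log L$) and instead use the renormalized trace $\gamma\beta_j(F_\eps)$ of Theorem \ref{main-result-cut} together with the monotonicity $\gamma_\pm\beta_j(F_\eps)\uparrow\gamma_\pm F_\eps$, interchanging the $j\to\infty$ and $\eps\to 0$ limits via the order-$\eps^3$ boundary entropy bound. This step is what crucially distinguishes the incoming case from the Maxwell-reflection setting of \cite{jm2017layer, saint2009book}, and it is where the full trace estimate \eqref{est-theorem-exi-trace-2} (as opposed to a Darroz\`es--Guiraud partial estimate) is indispensable.
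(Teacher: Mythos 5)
Your overall architecture is right (interior analysis as in \cite{lm2010soft}; boundary conditions read off from the smallness of the boundary relative entropy), but the step where you control the \emph{outgoing} trace is wrong as stated, and it is exactly the step around which the paper's boundary argument is built. You write that ``the outgoing trace is then controlled by Darroz\`es--Guiraud--type computations and the entropy dissipation.'' The Darroz\`es--Guiraud inequality is the Maxwell-reflection tool: it only controls the deviation of $\gamma_+ g_\eps$ from its velocity average on the boundary (the ``partial estimate'' the paper explicitly contrasts itself with in Remark \ref{remark-trace}), and with a prescribed incoming datum there is no reflection structure to apply it to. Moreover the bound you record for the outgoing side, namely that $\eps^{-2}\int_0^T\!\int_{\Sigma_+}h(\gamma_+G_\eps)\,\dd\mu\,\dd\sigma_x\,\dd t$ is bounded, is too weak: it only gives $\gamma_+\tilde g_\eps$ \emph{bounded} in $L^2$ of the boundary measure, so its weak limit need not vanish and you cannot conclude $\gamma g=0$ on $\Sigma_+$, hence not $\uu|_{\partial\Omega}=\theta|_{\partial\Omega}=0$. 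The correct observation --- and the reason the incoming problem is in this respect easier than the Maxwell one --- is that in \eqref{est-f-eps-entropy} the outgoing boundary relative entropy sits on the \emph{left-hand side} with a favorable sign, while the right-hand side is $\eps H(G_\eps^0)+\int_0^{\infty}\!\int_{\Sigma_-}h(z_\eps)\,\dd\mu\,\dd\sigma_x\,\dd s\le C_0\eps^3+C_1\eps^3$ by \eqref{be-f-initial-eps} and \eqref{closetom}. Hence $\int_0^t\!\int_{\Sigma_+}h(\gamma_+G_\eps)\,\dd\mu\,\dd\sigma_x\,\dd s=O(\eps^3)$, the same order as the incoming side, and both traces vanish at rate $\sqrt{\eps}$.

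A secondary point: your proposed interchange of the $j\to\infty$ and $\eps\to0$ limits on the renormalized traces $\gamma\beta_j(F_\eps)$ is more machinery than is needed. The paper fixes a single renormalization $\beta(z)=(z-1)/(1+(z-1)^2)$, sets $\tilde g_\eps=\eps^{-1}\beta(G_\eps)$, and uses the pointwise inequalities $\tilde g_\eps^2\le C\,g_\eps^2/(1+\tfrac13\eps g_\eps)\le 2C\,h(G_\eps)/\eps^2$ together with $\gamma\beta(F)=\beta(\gamma F)$ (Remark \ref{remark-renormlized}) to obtain Lemma \ref{lemma-g-hat}, i.e.\ $\|\gamma\tilde g_\eps\|^2_{L^2(\dd\varsigma\,\dd\sigma_x)}\le C(C_0+C_1)\eps$ on both $\Sigma_+$ and $\Sigma_-$. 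Inserting this into the weak formulation against test functions that do not vanish on $\partial\Omega$ kills the boundary term in the limit and yields $\gamma g=0$ (Lemma \ref{lemma-boundary}); no diagonal argument in $(j,\eps)$ is required. With the outgoing estimate corrected as above, your final step --- projecting $\gamma g=v\cdot\uu|_{\partial\Omega}+\theta|_{\partial\Omega}(|v|^2-5)/2=0$ onto the odd and even parts in $v$ to conclude $\uu|_{\partial\Omega}=\theta|_{\partial\Omega}=0$ --- is fine, as is your treatment of the interior and of the energy inequality \eqref{est-u-theta-f}.
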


\begin{remark}
	If $z_\eps = \mm$, the homogeneous incoming boundary condition, by simple calculation $C_1=0$.  From our result, if the given incoming boundary condition is close to $\mm$ in the relative entropy sense,  the homogeneous Dirichlet boundary conditions of Navier-Stokes equations can be derived.   Furthermore, according to  Sone's book \cite{sone2002book,sone2007}, the jump type boundary  conditions of NSF equations can be  derived   while $z_\eps$ is not close to $\mm$ in the relative entropy sense, i.e. $C_1 = + \infty$.   But this case  is more challenging. At least, there is no uniform relative entropy bound for fluctuations $G_\eps$ with respect to $\eps$. We leave it to the future work.
	
\end{remark}
For Navier-Stokes limit, the proof of the interior domain follows the proof of \cite{lm2010soft}. Here, we focus on the boundary part, that is to say, how to derive the boundary conditions of NSF   from homogeneous incoming boundary condition of Boltzmann equation.  The idea and strategy will be elaborated during its proof.

\section{Estimates of Approximate system}
\label{sec-approximate}
In this section, we will construct a sequence of approximate solution to Boltzmann equation with modified collision kernel $\Q^n$, namely
\begin{equation}
\label{collision-original-cut}
\begin{split}
\Q^n(F,F)=\big(\frac{1}{\footnotesize 1 + \frac{1}{n}\int F \bd v}\big)\int_{\mathbb{R}^3\times\mathbb{S}^2}B_n [F(v')F({v_*}')-F(v)F(v_*)]\bd v_*\bd \omega
\end{split}
\end{equation}
with \begin{align}
\label{cross-section}
B_n(v-v_*,\omega) = B(v-v_*, \omega) \cdot    \mathbf{1}_{ \frac{1}{n} \le  |v - v_*| \le n}.
\end{align}
For every $n \in \mathbb{N}^+$, the initial data approximate system  are chosen as these in \cite{diperna-lions1989cauchy}, namely \begin{align}
\label{app-f-0}
F_0^n = \tilde{F}_0^n + \tfrac{1}{n}\exp(-\tfrac{|x|^2}{2} - \tfrac{|v|^2}{2}),
\end{align}
where $\tilde{F}_0^n$ is obtained by truncating $f_0$ first and then smoothing it.  In details,   we will solve the following initial-boundary problem
\begin{align}
\label{boltzmann-approximate-lions}
\begin{cases}
\partial_t F^n + v \cdot \nabla_x F^n = \Q^n(F^n, F^n), \\
F^n(0,x, v) = F_0^n(x, v),\\
\gamma_-F^n =Z,~~\text{on}~~\Sigma_-
\end{cases}
\end{align}
where $Z$ satisfies  for all $t >0$
	\begin{align}
\label{app-g}
\int_{0}^{t} \int_{\Sigma_-} Z(1 + |v|^2 + |\log z|)  \mathrm{d}\mu \mathrm{d}\sigma_x \mathrm{d}s < C(t) < \infty,
\end{align}
and $F_0^n$ satisfies
	\begin{align}
	\label{app-f}
  \int_{\mathcal{O}} F_0^n(1 + |v|^2 + |\log F_0^n|)  \mathrm{d}\mu \mathrm{d}\sigma_x \mathrm{d}s < C_0 < \infty,~~\text{for all}~ n \in \mathbb{N}^+.
	\end{align}

For each fixed $n\in\mathbb{N}^+$,   System \eqref{boltzmann-approximate-lions} can be solved by the fixed point theorem to following iteration system
	\begin{align*}
	\begin{cases}
	\partial_t F^{n,k+1} + v \cdot \nabla_x F^{n,k+1} = \Q^n(F^{n,k}, F^{n,k}), \\
	F^{n,k}(0,x, v) = F_0^n(x, v),\\
	\gamma_-F^{n,k} = Z,~~\text{on}~~\Sigma_- .
	\end{cases}
	\end{align*}
	From \cite{diperna-lions1989cauchy},
	 \begin{align*}
	 \|Q^n(F, F)\|_{L^1({\mathcal{O}})} \le C_n \|F\|_{L^1({\mathcal{O}})},
	 \end{align*}
then for every $k$, the existence of the above equation is equal to that of the following equation:
\begin{align}
\label{boltzmann-approximate-linear}
\begin{cases}
\partial_t h + v \cdot \nabla_x h = \tilde{H},\\
h(0, x, v)=h_0(x, v),\\
\gamma_{-}h = z \quad \Sigma_{-}\, ,
\end{cases}
\end{align}
where for any $  T >0$,
\begin{align}
\label{linear-i-b}
 h_0 \in L^1(\mathcal{O}),~ \tilde{H} \in L^1((0,T) \times \mathcal{O}),~ z \in L^1((0,T)\times\Sigma_+).
 \end{align}

\par

\begin{theorem}[Existence of \eqref{boltzmann-approximate-linear}]
	\label{theorem-boltzmann-approximate-linear}
Under the assumption \eqref{linear-i-b},   there is a unique solution $h \in L^\infty([0,T], L^1(\mathcal{O}))$ such that
\[ \partial_t h + v \cdot \nabla_x h = \tilde{H}, \]
holds in the sense of distribution. Furthermore, there exists  a unique trace $\gamma_+ h \in L^1((0,T)\times\Sigma_+ ) $ to \eqref{boltzmann-approximate-linear}  such that
	\begin{align*}
	&\int_{0}^{T} \int_{\mathcal{O}} \big( \beta(h)(\partial_t \phi  + v \cdot \nabla_x \phi) + \tilde{H} \beta'(h)\phi\big) \mathrm{d}v \mathrm{d}x \mathrm{d}t \\
	& =   \int_{\mathcal{O}}  \phi(T) \beta(h)(T) \mathrm{d}v \mathrm{d}x   -    \int_{\mathcal{O}}  \phi(0) \beta(h_0)  \mathrm{d}v \mathrm{d}x   \\
	&  + \int_{0}^{T}  \int_{\Sigma_+} \phi \beta(\gamma_+ h) \bd \mu \bd \sigma_x  \bd t - \int_{0}^{T}  \int_{\Sigma_-} \beta( z) \phi  \bd \mu \bd \sigma_x  \bd t,
	\end{align*}
	for all $\beta(\cdot ) \in W^{1,\infty}_{loc}(\mathbb{R})$ with $\sup_{x \in \mathbb{R}}|\beta'(x)| < \infty$   and all the test function $\phi \in \mathcal{D}({[0,T]\times\bar{\Omega}\times\mathbb{R}^3})$.
Moreover, the solution and its trace satisfy the following estimate
\begin{align}
\label{estimate-of-linear}
&\int_{\mathcal{O}} |h|(t) \mathrm{d}v \mathrm{d}x + \int_{0}^{t}\int_{\Sigma_+} |\gamma_+ h| \mathrm{d}\mu \mathrm{d}\sigma_x\mathrm{d}s\qquad\qquad \nonumber\\
&\le C\bigg( \int_{{\mathcal{O}_T}} |\tilde{H}| \mathrm{d}v \mathrm{d}x \bd s + \int_{\mathcal{O}} |h_0| \mathrm{d}v \mathrm{d}x + \int_{0}^{t}\int_{\Sigma_-} | z| \mathrm{d}\mu \mathrm{d}\sigma_x\mathrm{d}s\bigg),\qquad t \le T.
\end{align}
	
\end{theorem}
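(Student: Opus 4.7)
My plan is to exploit the fact that the transport operator $\partial_t + \vdot$ has straight-line characteristics (constant drift in $v$), construct the solution explicitly by the method of characteristics, then verify the Green formula and $L^1$ estimate by density. For each $(x,v) \in \mathcal{O}$, set $\tau^-(x,v) = \sup\{s \ge 0 : x - rv \in \Omega \text{ for all } r \in [0,s)\}$, the backward exit time. The mild solution candidate is
\[ h(t,x,v) = \mathbf{1}_{\{t<\tau^-\}}\,h_0(x-tv,v) + \mathbf{1}_{\{t\ge\tau^-\}}\, z\bigl(t-\tau^-,\,x-\tau^- v,\,v\bigr) + \int_{(t-\tau^-)_+}^{t}\tilde H(s,\,x-(t-s)v,\,v)\,\bd s, \]
and analogously, for $(x,v)\in\Sigma_+$, the outgoing trace $\gamma_+ h(t,x,v)$ is obtained by evaluating the same formula at the outgoing boundary point (with $\tau^-$ replaced by the backward exit time from $\Sigma_+$). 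Measurability follows from the $W^{2,\infty}$-regularity of $\mathrm{n}$, which renders $\tau^-$ a measurable function of $(x,v)$.

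Once $h$ is defined, I would verify $h\in L^\infty([0,T];L^1(\mathcal{O}))$ and $\gamma_+ h\in L^1((0,T)\times\Sigma_+;\,\bd\mu\,\bd\sigma_x\,\bd t)$ by Fubini combined with the flow-box change of variables $(s,x_b,v)\mapsto (x_b+(\cdot)v,v)$ near $\Sigma_-$, whose Jacobian produces exactly the factor $|v\!\cdot\!\mathrm{n}(x_b)|=\bd\mu/\bd v$, converting the $L^1$ norm on $(0,T)\times\Sigma_-$ (for $z$) to an $L^1$ norm on a subset of $(0,T)\times\mathcal{O}$, and vice versa for tracking mass exiting through $\Sigma_+$. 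This simultaneously yields the $L^1$ estimate \eqref{estimate-of-linear} with constant $C=1$ and shows the boundary integrals are finite. To check that this $h$ satisfies the Green formula in the distributional sense, I would first assume $h_0$, $\tilde H$, and $z$ are smooth and supported away from the grazing set $\Sigma_0$; then $h$ is classical and the identity is a standard integration by parts along characteristics. The general case follows by $L^1$-approximation of the data, using the Lipschitz bound on $\beta$ and the continuity of the trace map built into \eqref{estimate-of-linear}.

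The estimate \eqref{estimate-of-linear} itself is re-derived inside the Green-formula framework by choosing $\beta_\delta(r)=\sqrt{r^2+\delta^2}-\delta$ together with $\phi\equiv 1$ suitably truncated in $v$, and letting $\delta\to 0$ to recover $\beta(r)=|r|$; the truncation error vanishes by dominated convergence using $h\in L^\infty_t L^1_{x,v}$. Uniqueness then follows by applying \eqref{estimate-of-linear} to the difference of two solutions with $h_0=0$, $\tilde H=0$, $z=0$. The main obstacle in this plan is the transition from the explicit characteristic representation to the distributional Green formula in Lemma \ref{green-formula-Lp}-style when $(h_0,\tilde H,z)$ are merely $L^1$: the mollification commutator is not uniformly controlled near $\Sigma_0$, so one must regularize by convolving only in directions \emph{transverse} to the boundary (as in the proof of Lemma \ref{green-formula-L-infty-defect}, with mollifier shifted inward by a factor along $\mathrm{n}(x)$) and then pass to the limit using the fact that $\Sigma_0$ is $\bd\mu\,\bd\sigma_x$-null thanks to the $W^{2,\infty}$ regularity of the outward normal. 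With this device in place, the identification between the characteristic trace and the Green-formula trace is immediate by integrating both against $\phi\in\mathcal{D}([0,T]\times\bar\Omega\times\mathbb{R}^3)$ and matching the resulting boundary integrals.
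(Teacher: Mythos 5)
Your proposal is correct and follows essentially the same route as the paper: existence by explicit integration along characteristics with the inflow data $z$ prescribed at backward exit times, and the $L^1$ estimate (hence uniqueness) obtained by applying the Green formula to smooth even approximations $\beta_\delta$ of the absolute value and letting $\delta\to 0$. The paper is terser about the measure-theoretic details you spell out (the flow-box Jacobian producing $|v\cdot\mathrm{n}|$, the treatment of the grazing set $\Sigma_0$), but the strategy is identical.
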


\begin{proof}
	
	This equation can be solved by characteristic line method.  Indeed, for any   $s \ge 0$ and any molecule which   locates at $ x \in \Omega$ with velocity $v$, we can define its trajectory as
	\[ S_t(s, x, v) = (s + t, x + vt, v),~~ S_t \in \mathcal{O},~ -t_1^- < t < t_1, x_t = x + vt, \]
	where $t_1$ denotes the maximal forward time of staying in $\mathcal{O}$(with velocity $v$) and $t_1^-$ denotes the maximal backward time of staying in $\mathcal{O}$(with velocity $-v$). Then integrating over the characteristic line $S_t$,
	\begin{align*}
	h(s + t,x + v t, v) = h(s, x, v) + \int_{s}^{t}\tilde{H}(s + \tau, x + \tau v, v)\bd\tau, x \in \Omega.
	\end{align*}
	We just need to pay careful attention to the particles which locate  at the boundary or are going to hit the boundary. When the molecule hits the boundary at $x_{1}$ with velocity $v$, the velocity after collision is given by the specular reflection formula, $R_{x_{1}} v = v - 2[n(x_{1}) v] n(x_{1})$.
	Then we can set the point $(t_1, x_{t_1}, R_{x_{1}}v)$ as new "initial" point. Similarly, we can define $t_{2}$, the maximal forward time of staying in $\mathcal{O}$(with velocity $R_{x_{t_1}} v$) starting from $x_1$ and the trajectory
	\[ S_t(s, x, v) = (s + t, x_{1} + R_{x_{t_1}} v (t - t_1), R_{x_{t_1}} v),~~ S_t \in \mathcal{O},~ t_1 \le t < t_2. \]
	According to the boundary condition,
	\[ \gamma_- h(t_1, x_1, R_{x_{t_1}} v) = z(t_1, x_1, R_{x_{t_1}} v), \]
	then for the  particle located  $ x_{1} \in \partial\Omega$ with velocity $R_{x_{1}} v$ and $t_1 \le t < t_2$,
	\begin{align*}
	h(t,x_{1} + R_{x_1} v (t-t_1) , R_{x_1} v) = z(t_1, x_{1} , R_{x_1} v) + \int_{t_1}^{t}\tilde{H}( \tau,x_{1} + R_{x_1} v (\tau - t_1) , R_{x_1} v)\bd\tau.
	\end{align*}
Inductively,  we can define its trajectory  $S_t(s,x,v)$ for any $t >0$ and solve \eqref{boltzmann-approximate-linear} by integrating over the characteristic line $S_t$.
	
	We start to deduce some estimates. Let $\beta_\delta$ be a sequence of even smooth functions,  such that
	\[ \beta_\delta(0)=0, \beta_\delta(y) \ge 0,  ~ |\beta_\delta'(y)| \le 1, \beta_\delta \to |y| ( \delta \to 0).   \]
	By Lemma \ref{green-formula-Lp}, multiplying the first equation in \ref{boltzmann-approximate-linear}, we can infer that
	\begin{align}
	\label{estimate-of-linear-linear}
	\begin{split}
	& \int_{\mathcal{O}} \beta_\delta(h)(t) \mathrm{d}v \mathrm{d}x + \int_{0}^{t}\int_{\Sigma_+} \beta_\delta(\gamma_+ h) \mathrm{d}\mu \mathrm{d}\sigma_x\mathrm{d}s\qquad\qquad  \\
	& \le \int_{0}^{t}\int_{\Sigma_-} \beta_\delta(z) \mathrm{d}\mu \mathrm{d}\sigma_x\mathrm{d}s + \int_{{\mathcal{O}_T}} |\tilde{H}| \mathrm{d}v \mathrm{d}x \bd s + \int_{\mathcal{O}} \beta_\delta(F^n_0) \mathrm{d}v \mathrm{d}x\\
	& \le \int_{0}^{t}\int_{\Sigma_-} |z| \mathrm{d}\mu \mathrm{d}\sigma_x\mathrm{d}s + \int_{{\mathcal{O}_T}} |\tilde{H}| \mathrm{d}v \mathrm{d}x \bd s + \int_{\mathcal{O}} |F^n_0| \mathrm{d}v \mathrm{d}x.
	\end{split}
	\end{align}
	We complete the proof by letting $\delta \to 0$.
	
\end{proof}
For   every $k\ge 0$ and any $T>0$, if we set $F^{n,k}(0)= F^n_0$,  by Theorem \ref{theorem-boltzmann-approximate-linear},  the following system
\begin{align*}
	\begin{cases}
	\partial_t F^{n,k+1} + v \cdot \nabla_x F^{n,k+1} = \Q^n(F^{n,k}, F^{n,k}),  v\in \mathbb{R}^3, \\
	F^{n,k}(0,x, v) = F_0^n(x, v),\\
	\gamma_-F^{n,k} =Z,~~\text{on}~~\Sigma_-,
	\end{cases}
	\end{align*}
	admits a unique solution $F^{n,k} \in L^\infty([0,T], L^1(\mathcal{O}))$.
	
In fact, for any fixed $n$,  as long as the life span is small enough, $F^{n,k}$ is a compact sequence in $L^1(\mathcal{O})$.
\begin{theorem}[Local-in-time existence to \eqref{boltzmann-approximate-lions}]
	\label{theorem-boltzmann-approximate-lions-local} For every $n$, under the assumptions on initial data \eqref{app-f} and on the incoming boundary condition \eqref{app-g}, there exists some $T_n$ such system \eqref{boltzmann-approximate-lions} admits a unique solution $F^n \in L^\infty([0,T_n];L^1(\mathcal{O}))$ in the sense of distribution. Further, there exists  a unique trace  $\gamma_+ F^n \in L^1((0,T_n)\times\Sigma_+ ) $    such that
	\begin{align*}
	&\int_{0}^{T_n} \int_{\mathcal{O}} \big( \beta(F^n)(\partial_t \phi  + v \cdot \nabla_x \phi) + \Q^n(F^n,F^n) \beta'(F^n)\phi\big) \mathrm{d}v \mathrm{d}x \mathrm{d}t \\
	& =   \int_{\mathcal{O}}  \phi(T_n) \beta(F^n)(T_n) \mathrm{d}v \mathrm{d}x   -    \int_{\mathcal{O}}  \phi(0) \beta(F^n_0)  \mathrm{d}v \mathrm{d}x   \\
	&  + \int_{0}^{T_n}  \int_{\Sigma_+} \phi \beta(\gamma_+ F^n) \bd \mu \bd \sigma_x  \bd t - \int_{0}^{T_n}  \int_{\Sigma_-} \beta( Z) \phi  \bd \mu \bd \sigma_x  \bd t,
	\end{align*}
	for  all $\beta(\cdot ) \in W^{1,\infty}_{loc}(\mathbb{R})$ with $\sup_{x \in \mathbb{R}}|\beta'(x)| < \infty$   and all the test function $\phi \in \mathcal{D}({[0,T]\times\bar{\Omega}\times\mathbb{R}^3})$.
	Moreover, there exists a constant $C_n$ such that the solution and its trace satisfy the following estimates
\begin{align}
\label{est-theorem-approximate-lions-local}
\begin{split}
\int_{\mathcal{O}} |F^n|(t) \mathrm{d}v \mathrm{d}x + \int_{0}^{t}\int_{\Sigma_+} |\gamma_+ F^n| \mathrm{d}\mu \mathrm{d}\sigma_x\mathrm{d}s \\
\le C_n \bigg( \int_{\mathcal{O}} |F_0^n| \mathrm{d}v \mathrm{d}x + \int_{0}^{t}\int_{\Sigma_-} |Z| \mathrm{d}\mu \mathrm{d}\sigma_x\mathrm{d}s \bigg), ~ t\le T_n.
\end{split}
\end{align}
\end{theorem}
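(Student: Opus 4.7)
The natural approach is to realize $F^n$ as the fixed point of the Picard iteration displayed immediately before the statement, with Theorem~\ref{theorem-boltzmann-approximate-linear} serving as the linear building block. Concretely, define the map $\mathcal{T}\colon L^\infty([0,T];L^1(\mathcal{O})) \to L^\infty([0,T];L^1(\mathcal{O}))$ by $\mathcal{T}(F)=\tilde F$, where $\tilde F$ is the unique distributional solution produced by Theorem~\ref{theorem-boltzmann-approximate-linear} for the linear transport problem with source $\tilde H=\Q^n(F,F)$, initial datum $F^n_0$, and incoming data $Z$. That theorem also delivers the outgoing trace $\gamma_+\tilde F\in L^1((0,T)\times\Sigma_+)$ and the estimate \eqref{estimate-of-linear}, so the entire statement reduces to showing that $\mathcal{T}$ is well-defined on a suitable ball and contracts there for sufficiently small $T_n$.

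\textbf{Key bilinear estimates.} The truncation $B_n=B\cdot\mathbf{1}_{1/n\le|v-v_*|\le n}$ is bounded and compactly supported in the relative velocity, and the factor $1/(1+\tfrac{1}{n}\int F\,\bd v)$ absorbs one power of the density. A direct calculation then yields
\[
 \|\Q^n(F,F)(t)\|_{L^1(\mathcal{O})} \le C_n\,\|F(t)\|_{L^1(\mathcal{O})},
\]
and, by the bilinearity of the gain and loss parts together with the Lipschitz dependence of the renormalizing factor on $\int F\,\bd v$,
\[
 \|\Q^n(F_1,F_1)-\Q^n(F_2,F_2)\|_{L^1(\mathcal{O})} \le C_n\bigl(\|F_1\|_{L^1}+\|F_2\|_{L^1}\bigr)\|F_1-F_2\|_{L^1(\mathcal{O})}.
\]
Here $C_n$ depends on the cutoff but not on time; this is enough to feed $\tilde H=\Q^n(F,F)$ into Theorem~\ref{theorem-boltzmann-approximate-linear}.

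\textbf{Fixed point and trace.} Choose $R$ larger than twice the constant $C$ from \eqref{estimate-of-linear} times $\bigl(\|F^n_0\|_{L^1(\mathcal{O})}+\int_0^\infty\!\!\int_{\Sigma_-}Z\,\bd\mu\,\bd\sigma_x\,\bd t\bigr)$, and let $\mathcal{B}_R^T$ be the closed ball of radius $R$ in $L^\infty([0,T];L^1(\mathcal{O}))$. Applying \eqref{estimate-of-linear} to $\mathcal{T}(F)$ and inserting the previous estimates gives, for $F,F_1,F_2\in\mathcal{B}_R^T$,
\[
 \|\mathcal{T}(F)\|_{L^\infty_t L^1} \le \tfrac{R}{2}+C\,C_n R^2 T,\qquad \|\mathcal{T}(F_1)-\mathcal{T}(F_2)\|_{L^\infty_t L^1}\le 2C\,C_n R\,T\,\|F_1-F_2\|_{L^\infty_t L^1}.
\]
Choosing $T_n=T_n(n,R)$ small enough makes $\mathcal{T}$ a strict self-contraction on $\mathcal{B}_R^{T_n}$, and Banach's theorem produces the unique fixed point $F^n\in\mathcal{B}_R^{T_n}$. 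The outgoing trace and the renormalized Green identity for $F^n$ are inherited directly from Theorem~\ref{theorem-boltzmann-approximate-linear} applied at this fixed point (with source $\Q^n(F^n,F^n)\in L^1((0,T_n)\times\mathcal{O})$), and the estimate \eqref{est-theorem-approximate-lions-local} follows from \eqref{estimate-of-linear} combined with the $L^1$ bound on $\Q^n(F^n,F^n)$ (absorbing the quadratic term by Gronwall, or by a further shrinking of $T_n$).

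\textbf{Expected obstacle.} The only substantive point is the $n$-dependence of $T_n$: the truncation constant $C_n$ grows with $n$ (roughly like $n$ times the supremum of $A$ on the annulus $\{1/n\le|z|\le n\}$), so $T_n\to 0$ as $n\to\infty$. This is harmless for the present local statement, but it is precisely the reason why the eventual global-in-time extension of $F^n$ up to any fixed horizon $T>0$ cannot be obtained by mere iteration of this local result and must instead exploit the \emph{a priori} entropy and moment bounds \eqref{app-f}--\eqref{app-g}.
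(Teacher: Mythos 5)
Your proposal is correct and follows essentially the same route as the paper: the paper runs the explicit Picard iteration $F^{n,k}\mapsto F^{n,k+1}$ through the linear solver of Theorem~\ref{theorem-boltzmann-approximate-linear}, obtains uniform bounds from $\|\Q^n(F,F)\|_{L^1}\le C_n\|F\|_{L^1}$, and shows the iterates form a Cauchy (contracting) sequence for $t\le T^n$ with $T^n C_n<1$, which is exactly your Banach fixed-point argument in sequential form; the traces and the Green identity are likewise inherited from the linear theorem, and your closing observation that $T_n$ shrinks with $n$ matches the paper's remark that globalization must come from the entropy and moment bounds rather than iteration of the local result.
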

\begin{proof}

	We are going to use iteration methods to prove this theorem. For each $n\ge 1$, we can obtain  the existence of $F^{n,1}$ by applying Theorem \ref{theorem-boltzmann-approximate-linear} to the following system:
	\begin{align}
		\label{boltzmann-approximate-lions-ite-1}
		\begin{cases}
			\partial_t F^{n,1} + v \cdot \nabla_x F^{n,1} = Q^n(F_0, F_0), \\
			F^{n,1}(0,x, v) = F_0(x, v),\\
			\gamma_-F^{n,1} =Z,~~\text{on}~~\Sigma_- .
		\end{cases}
	\end{align}

Recalling
\begin{align*}
\|\Q^n(F, F)\|_{L^1({\mathcal{O}})} \le C_n \|F\|_{L^1({\mathcal{O}})},
\end{align*} inductively, we can obtain a solution sequence   $\{F^{n,k}\}$ from the following iteration system for each $k\ge 1$
	\begin{align}
	\label{boltzmann-approximate-lions-ite-k}
	\begin{cases}
	\partial_t F^{n,k+1} + v \cdot \nabla_x F^{n,k+1} = Q^n(F^{n,k}, F^{n,k}), \\
	F^{n,k}(0,x, v) = F_0(x, v),\\
	\gamma_-F^{n,k} =Z,~~\text{on}~~\Sigma_- .
	\end{cases}
	\end{align}
	Moreover, $F^{n,k}$ satisfies
	\begin{align}
	\label{estimate-boltzmann-approximate-lions-ite-k}
	\sup\limits_{0 \le s \le t}\int_{\mathcal{O}} |F^{n,k}|(s) \mathrm{d}v \mathrm{d}x + \int_{0}^{t}\int_{\Sigma_+} |\gamma_+ F^{n,k}| \mathrm{d}\mu \mathrm{d}\sigma_x\mathrm{d}s\qquad\qquad \nonumber\\
	\le C_n\bigg( \int_{{\mathcal{O}_T}} |F^{n,k-1}| \mathrm{d}v \mathrm{d}x \bd s + \int_{\mathcal{O}} |F_0| \mathrm{d}v \mathrm{d}x + \int_{0}^{t}\int_{\Sigma_-} | Z| \mathrm{d}\mu \mathrm{d}\sigma_x\mathrm{d}s\bigg).
	\end{align}
	Denote:
	\[  C_{0,z,1} = \int_{\mathcal{O}} |F_0| \mathrm{d}v \mathrm{d}x + \int_{0}^{1}\int_{\Sigma_-} | Z| \mathrm{d}\mu \mathrm{d}\sigma_x\mathrm{d}s. \]
	Then, for $ t \le  1$,
		\begin{align}
		\label{estimate-boltzmann-approximate-lions-ite-uni-0}
        \begin{split}
		&\sup\limits_{0 \le s \le t}\int_{\mathcal{O}} |F^{n,k}|(s) \mathrm{d}v \mathrm{d}x + \int_{0}^{t}\int_{\Sigma_+} |\gamma_+ F^{n,k}| \mathrm{d}\mu \mathrm{d}\sigma_x\mathrm{d}s  \\
		&\le C_n\bigg( \int_{{\mathcal{O}_t}} |F^{n,k-1}| \mathrm{d}v \mathrm{d}x \bd s + C_{0,z,1}\bigg)\\
		&\le C_nt  \int_{{\mathcal{O}_t}} |F^{n,k-2}| \mathrm{d}v \mathrm{d}x \bd s + (1 + C_n t) C_{0,z,1}\\
		&\le (C_nt)^{k-1} t \int_{\mathcal{O}} |F_0| \mathrm{d}v \mathrm{d}x  + (1 + C_n t + \cdots + (C_nt)^{k-1}) C_{0,z,1}.
	    \end{split}
		\end{align}

	Choosing small enough $t^n$ such that $C_n t^n < 1$,  there exists $\tilde{C}_n$ such that for any $k\in \mathbb{N}^+$
			\begin{align}
			\label{estimate-boltzmann-approximate-lions-ite-uni}
			\begin{split}
			&	\sup\limits_{0 \le s \le t}\int_{\mathcal{O}} |F^{n,k}|(s) \mathrm{d}v \mathrm{d}x + \int_{0}^{t}\int_{\Sigma_+} |\gamma_+ F^{n,k}| \mathrm{d}\mu \mathrm{d}\sigma_x\mathrm{d}s \le \tilde{C}_n C_{0,z,1},~ t \le t^n.
			\end{split}
			\end{align}
			In fact, $\{F^{n,k}\}$ is a convergent sequence in $L^\infty([0,t^n], L^1(\mathcal{O}))$.  Noticing system \eqref{boltzmann-approximate-lions-ite-k} is a linear equation, we can infer
				\begin{align}
				\label{boltzmann-approximate-lions-ite-k-con}
				\begin{cases}
				\partial_t (F^{n,k+1} - F^{n,k})  + v \cdot \nabla_x (F^{n,k+1} - F^{n,k}) = Q^n(F^{n,k}, F^{n,k}) - Q^n(F^{n,k-1}, F^{n,k-1}),\\
				(F^{n,k+1} - F^{n,k})(0,x, v) = 0,\\
				\gamma_-(F^{n,k+1} - F^{n,k}) =0,~~\text{on}~~\Sigma_- .
				\end{cases}
				\end{align}
				By simple calculation,  one gets
				\begin{align}
				\label{estimate-boltzmann-approximate-lions-ite-con}
				&\sup\limits_{0 \le s \le t}\int_{\mathcal{O}} |F^{n,k+1} - F^{n,k}|(s) \mathrm{d}v \mathrm{d}x + \int_{0}^{t}\int_{\Sigma_+} |\gamma_+ (F^{n,k+1} - F^{n,k}) | \mathrm{d}\mu \mathrm{d}\sigma_x\mathrm{d}s\qquad\qquad \nonumber\\
				&\le \tilde{C}_n t \sup\limits_{0 \le s \le t}\int_{\mathcal{O}} |F^{n,k} - F^{n, k -1}|(s) \mathrm{d}v \mathrm{d}x,
				\end{align}
where we have use
\[  \|Q^n(F, F)-Q^n(G,G)\|_{L^1({\mathcal{O}})} \le C_n \|F-G\|_{L^1({\mathcal{O}})}. \]
Choosing $t^n_1 < t^n$ such that $t^n_1 \tilde{C}_n  < 1$, then $\{F^{n,k}\}$ is a convergent sequence. Resetting $T^n = t^n_1$, we complete the proof.
\end{proof}

The life span  $T^n$ in Theorem \ref{theorem-boltzmann-approximate-lions-local}    does not have a lower bound with $n$. This results from the source term. With the help of the symmetry properties of the Boltzmann collision kernel, we conclude

\begin{theorem}[global-in-time existence to \eqref{boltzmann-approximate-lions}]
	\label{theorem-boltzmann-approximate-lions-global} For any $T>0$, under the assumptions \eqref{app-g} and \eqref{app-f},  for every $n$,  system \eqref{boltzmann-approximate-lions} has a unique solution $F^n \in L^\infty([0,T];L^1(\mathcal{O}))$ such that
	\[ \partial_t F^n + v \cdot \nabla_x F^n = \Q^n(F^n, F^n) \]
	holds in the sense of distribution. Further, there exists  a unique trace $\gamma_+ F  \in L^1((0,T)\times\Sigma_+) $    such that
	\begin{align*}
	&\int_{0}^{T} \int_{\mathcal{O}} \big( \beta(F^n)(\partial_t \phi  + v \cdot \nabla_x \phi) + \Q^n(F^n,F^n) \beta'(F^n)\phi\big) \mathrm{d}v \mathrm{d}x \mathrm{d}t \\
	& =   \int_{\mathcal{O}}  \phi(T) \beta(F^n)(T) \mathrm{d}v \mathrm{d}x   -    \int_{\mathcal{O}}  \phi(0) \beta(F^n_0)  \mathrm{d}v \mathrm{d}x   \\
	&  + \int_{0}^{T}  \int_{\Sigma_+} \phi \beta(\gamma_+ F^n) \bd \mu \bd \sigma_x  \bd t - \int_{0}^{T}  \int_{\Sigma_-} \beta( Z) \phi  \bd \mu \bd \sigma_x  \bd t,
	\end{align*}
	for all $\beta(\cdot ) \in W^{1,\infty}_{loc}(\mathbb{R})$ with $\sup_{x \in \mathbb{R}}|\beta'(x)| < \infty$   and all the test function $\phi \in \mathcal{D}({[0,T]\times\bar{\Omega}\times\mathbb{R}^3})$.
 Furthermore, $F^n$ and $\gamma_+ F^n$ satisfy
 \begin{itemize}
 	\item local conservation law of mass: for any $\phi \in \mathcal{D}(\bar{\Omega})$,
 	\begin{align}
\label{est-theorem-approximate-lions-local-density}
\begin{split}
&\int_{\mathcal{O}}   F^n(t) \phi  \mathrm{d}v \mathrm{d}x + \int_{0}^{t}\int_{\Sigma_+}  \gamma_+ F^n  \phi \dd \mu \mathrm{d}\sigma_x\mathrm{d}s \\
&=  \int_{\mathcal{O}} F^n_0  \phi \mathrm{d}v \mathrm{d}x  +  \int_{0}^{t}\int_{\Sigma_-} Z \phi  \dd \mu \mathrm{d}\sigma_x\mathrm{d}s,\qquad t \le T.
\end{split}
\end{align}
 	\item global conservation law of mass:
 	\begin{align}
 	\label{est-theorem-approximate-lions-global-density}
 	\begin{split}
 	&\int_{\mathcal{O}}   F^n(t) \mathrm{d}v \mathrm{d}x + \int_{0}^{t}\int_{\Sigma_+}  \gamma_+ F^n \dd \mu \mathrm{d}\sigma_x\mathrm{d}s \\
 	&=  \int_{\mathcal{O}} F^n_0 \mathrm{d}v \mathrm{d}x  +  \int_{0}^{t}\int_{\Sigma_-} Z \dd \mu \mathrm{d}\sigma_x\mathrm{d}s,\qquad t \le T.
 	\end{split}
 	\end{align}
 	\item global conservation law of momentum
 	\begin{align}
 	\label{est-theorem-approximate-lions-global-mo}
 	\begin{split}
 	&\int_{\mathcal{O}}  v F^n(t) \mathrm{d}v \mathrm{d}x + \int_{0}^{t}\int_{\Sigma_+} v \gamma_+ F^n \dd \mu \mathrm{d}\sigma_x\mathrm{d}s \\
 	& =  \int_{\mathcal{O}} v F^n_0 \mathrm{d}v \mathrm{d}x  +  \int_{0}^{t}\int_{\Sigma_-} v Z \dd \mu \mathrm{d}\sigma_x\mathrm{d}s, \qquad t \le T.
 	\end{split}
 	\end{align}
 	\item global conservation law of energy
 	\begin{align}
 	\label{est-theorem-approximate-lions-global-energy}
 	\begin{split}
 	&\int_{\mathcal{O}}  | v|^2F^n(t) \mathrm{d}v \mathrm{d}x + \int_{0}^{t}\int_{\Sigma_+} | v|^2\gamma_+ F^n \dd \mu \mathrm{d}\sigma_x\mathrm{d}s \\
 	&\le  \int_{\mathcal{O}}  | v|^2F^n_0 \mathrm{d}v \mathrm{d}x  +  \int_{0}^{t}\int_{\Sigma_-}| v|^2 Z \dd \mu \mathrm{d}\sigma_x\mathrm{d}s, \qquad t \le T.
 	\end{split}
 	\end{align}
 	\item global entropy inequality
 	\begin{align}
 	&\label{est-theorem-app-lions-entropy}
 	\int_{\mathcal{O}}  F^n\log F^n(t) \mathrm{d}v \mathrm{d}x + \int_{0}^{t}\int_{\Sigma_+}\gamma_+F^n  \log \gamma_+ F^n \dd \mu \mathrm{d}\sigma_x\mathrm{d}s +  \int_{0}^t \mathcal{D}(F^n)(s)\mathrm{d}s  \nonumber \\
 	& \le \int_{\mathcal{O}}  F^n_0\log F^n_0 \mathrm{d}v \mathrm{d}x + \int_{0}^{t}\int_{\Sigma_-}Z \log Z \dd \mu \mathrm{d}\sigma_x\mathrm{d}s,\qquad  t \le T,
 	\end{align}
 	\item global relative entropy inequality
 	\begin{align}
 	&    H(F^n| \mathrm{M} )(t) + \int_{0}^{t}\int_{\Sigma_+}h(\gamma_+ F^n/\mathrm{M} ) \dd \mu \mathrm{d}\sigma_x\mathrm{d}s +  \int_{0}^t \mathcal{D}(F^n)(s)  \mathrm{d}s\nonumber\\
 	& \label{est-theorem-app-lions-global-relative-entropy} \le  H(F_0^n| \mathrm{M} )  + \int_{0}^{t}\int_{\Sigma_-}h(Z| \mathrm{M}) \dd \mu \mathrm{d}\sigma_x\mathrm{d}s,\qquad   t  \le T.
 	\end{align}
 \end{itemize}

\end{theorem}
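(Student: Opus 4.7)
The strategy is to bootstrap the local solution from Theorem \ref{theorem-boltzmann-approximate-lions-local} into a global one via an a priori mass bound, and then to read off the listed conservation laws and entropy inequalities by specializing the Green formula of Lemma \ref{green-formula-Lp} to carefully chosen renormalizers $\beta$ and test functions $\phi$. Throughout the plan I exploit that, at each fixed $n$, $\Q^n(F^n,F^n)\in L^1$ by the truncation \eqref{cross-section}, so no renormalization is actually needed to manipulate the equation pointwise in $t$.

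For the global existence, I would take in Lemma \ref{green-formula-Lp} a sequence $\beta_\delta$ of smooth, increasing approximations of the identity with $\beta_\delta(0)=0$ and $|\beta_\delta'|\le 1$, together with a velocity cutoff $\chi_R(v)\uparrow 1$. The key input is the collisional symmetry $\int_{\mathbb{R}^3}\Q^n(F^n,F^n)\,\dd v=0$, which kills the source term in the limit $\delta\to 0$, $R\to\infty$, yielding
\[
\int_{\mathcal O}F^n(t)\,\dd v\,\dd x+\int_0^t\!\!\int_{\Sigma_+}\gamma_+F^n\,\xmus
=\int_{\mathcal O}F^n_0\,\dd v\,\dd x+\int_0^t\!\!\int_{\Sigma_-}Z\,\xmus.
\]
This is \eqref{est-theorem-approximate-lions-global-density} and, more importantly, provides a uniform $L^\infty_t L^1_{x,v}$ bound. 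Since the constant $C_n$ of Theorem \ref{theorem-boltzmann-approximate-lions-local} depends only on the truncation $n$ and on the $L^1$ size of the data, the local solution can be restarted from $T^n$ with the same lifetime; iterating produces a unique solution on $[0,T]$ for any $T>0$. The local mass conservation \eqref{est-theorem-approximate-lions-local-density} is obtained by the same $\beta_\delta$ argument with a test function $\phi(x)\in\mathcal D(\bar\Omega)$ depending only on $x$, using once more $\int\Q^n\,\dd v=0$.

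The global momentum identity \eqref{est-theorem-approximate-lions-global-mo} follows from Lemma \ref{green-formula-Lp} with weight $v_i\,\chi_R(v)$, exploiting $\int v\,\Q^n(F^n,F^n)\,\dd v=0$ and passing to $R\to\infty$ with the second-moment control coming from the energy step; the energy inequality \eqref{est-theorem-approximate-lions-global-energy} is derived analogously with weight $|v|^2\chi_R(v)$, the inequality arising from the velocity cutoff since the collision contribution vanishes by symmetry while the boundary and initial terms have the correct sign. For the entropy I would approximate $x\log x$ by a sequence $\beta_m$ with uniformly bounded $\beta_m'$ and invoke the H-dissipation identity $\int\Q^n(F^n,F^n)\log F^n\,\dd v=-4\mathcal D(F^n)\le 0$, which is preserved by the truncation $B_n$. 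Monotone convergence in $m$ delivers \eqref{est-theorem-app-lions-entropy}, and the relative entropy form \eqref{est-theorem-app-lions-global-relative-entropy} results from adding $\log\mm=-\tfrac{3}{2}\log(2\pi)-\tfrac{|v|^2}{2}$ times the mass and energy identities already established. The main obstacle is the boundary passage to the limit: one needs the monotonicity in Lemma \ref{green-formula-L-infty-defect}(II) to identify $\gamma_+\beta_m(F^n)$ with $\beta_m(\gamma_+F^n)$ and then let $m\to\infty$ to recover $\gamma_+F^n\log\gamma_+F^n$, all while simultaneously controlling the growth in $|v|$ (for momentum and energy) and the convexity in $F^n$ (for entropy). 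The cutoff $B_n$ keeps the $L^1$ theory linear and makes this passage tractable once the correct approximating sequences are chosen.
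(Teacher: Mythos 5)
Your proposal follows essentially the same route as the paper: the conservation laws and entropy inequalities are obtained by testing the (genuinely $L^1$) truncated equation against the collision invariants $1$, $v$, $|v|^2$ and against $1+\log F^n$ and $\log \mm$, and the resulting a priori bounds combined with the local theory yield global existence by continuation. The only step you leave implicit that the paper makes explicit is the strict positivity of $F^n$ and $\gamma_+ F^n$ (obtained via the DiPerna--Lions lower-bound trick), which is needed before $\log F^n$ and the H-dissipation identity can legitimately be invoked.
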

\begin{proof}
Noticing that $1$ and $|v|^2$  lay in the kernel of $\mathcal{Q}^n$, we infer that
	\begin{align}
	\label{est-app-lions-density}
	\begin{split}
	& \int_{\mathcal{O}} F^n(t) \mathrm{d}v \mathrm{d}x + \int_{0}^{t}\int_{\Sigma_+}  \gamma_+ F^n \mathrm{d}\mu \mathrm{d}\sigma_x\mathrm{d}s \\
	& =    \int_{\mathcal{O}} F_0^n \mathrm{d}v \mathrm{d}x + \int_{0}^{t}\int_{\Sigma_-}  Z  \mathrm{d}\mu \mathrm{d}\sigma_x\mathrm{d}s , ~ t > 0,
	\end{split}
	\end{align}
and
		\begin{align}
		\label{est-app-lions-energy}
		\begin{split}
		& \int_{\mathcal{O}} |v|^2F^n(t) \mathrm{d}v \mathrm{d}x + \int_{0}^{t}\int_{\Sigma_+}  |v|^2 \gamma_+ F^n \mathrm{d}\mu \mathrm{d}\sigma_x\mathrm{d}s \\
		& =    \int_{\mathcal{O}} |v|^2F_0^n \mathrm{d}v \mathrm{d}x + \int_{0}^{t}\int_{\Sigma_-} |v|^2 Z  \mathrm{d}\mu \mathrm{d}\sigma_x\mathrm{d}s , ~ t > 0.
		\end{split}
		\end{align}
Then for any $t>0$, we find
	\begin{align}
	\label{est-app-lions-density+energy}
	\begin{split}
	& \sup\limits_{0 \le s \le t}\int_{\mathcal{O}} (1 + |v|^2)F^n(s) \mathrm{d}v \mathrm{d}x + \int_{0}^{t}\int_{\Sigma_+} (1 + |v|^2) \gamma_+ F^n \mathrm{d}\mu \mathrm{d}\sigma_x\mathrm{d}s \\
	& \le    \int_{\mathcal{O}} (1 + |v|^2) F_0^n \mathrm{d}v \mathrm{d}x + \int_{0}^{t}\int_{\Sigma_-} (1 + |v|^2) Z  \mathrm{d}\mu \mathrm{d}\sigma_x\mathrm{d}s.
	\end{split}
	\end{align}
	Using the same trick as the one in \cite[pp.360]{diperna-lions1989cauchy}, one finds that
	\begin{align}
	\label{estimate-interior-positive}
	F^n(t,x,v)>0,~~  t \le T^n, (x,v) \in \mathcal{O}.
	\end{align}
	Moreover,
	\begin{align}
	\label{estimate-boundary-positive}
	\gamma_+ F^n(t,x,v)>0,~~  t \le T^n, (x,v) \in \Sigma_+.
	\end{align}		
For the relative entropy inequality, multiplying the first equation of \eqref{boltzmann-approximate-lions} by $ 1 + \log F$, $\log \mathrm{M} $ respectively,  we have
\begin{align}
\label{boltzmann-approximate-lions-entropy}
\partial_t (F^n\log F^n) + v \cdot \nabla_x (F^n\log F^n) = Q^n(F^n, F^n)(1+\log F^n),\\
\label{boltzmann-approximate-lions-r-entropy-2}
\partial_t (F^n\log \mathrm{M} ) + v \cdot \nabla_x (F^n\log \mathrm{M} ) = Q^n(F^n, F^n)( \log \mathrm{M} ).
\end{align}
In the light of \eqref{boltzmann-approximate-lions-entropy} and \eqref{boltzmann-approximate-lions-r-entropy-2},  we can infer that
\begin{align}
\label{boltzmann-approximate-lions-r-entropy}
\partial_t  H(F^n| \mathrm{M} ) + v \cdot \nabla_x  H(F^n| \mathrm{M} ) = Q^n(F^n, F^n)\log f_{\mathrm{M}}^n.
\end{align}
For any $t>0$, integrating \eqref{boltzmann-approximate-lions-r-entropy} over $\mathcal{O}_t$, we conclude that
\begin{align*}
& \sup\limits_{0\le s \le t}  \int_{\mathcal{O}}H(F^n| \mathrm{M} )(s)\dd v \dd x + \int_{0}^{t}\int_{\Sigma_+}\gamma_+ \big(H(F^n|\mathrm{M} )\big)(s) \mathrm{d}\mu \mathrm{d}\sigma_x\mathrm{d}s +  \int_{0}^t \mathcal{D}(F^n)(s) \mathrm{d}s\\
& \le \int_{\mathcal{O}}H(F_0^n| \mathrm{M} )(s)\dd v \dd x + \int_{0}^{t}\int_{\Sigma_-} H(Z| \mathrm{M} ) \mathrm{d}\mu \mathrm{d}\sigma_x\mathrm{d}s.
\end{align*}
Denoting
\[ C_{H,z,t} = \int_{\mathcal{O}}H(F_0^n| \mathrm{M})(t)\dd v \dd x + \int_{0}^{t}\int_{\Sigma_-} H(Z| \mathrm{M} ) \mathrm{d}\mu \mathrm{d}\sigma_x\mathrm{d}s, \]
and recalling that relative entropy is always positive, the above inequality indicates  that
\begin{align}
&\label{est-bolt-app-lions-re-entropy-interior}
\int_{\mathcal{O}}H(F^n| \mathrm{M} )(t)\dd v \dd x  \le  C_{H,z,t},\\
&\label{est-bolt-app-lions-re-entropy-boundary}
\int_{0}^{t}\int_{\Sigma_+}\gamma_+ \bigg(H(F^n| \mathrm{M} )\bigg)(s) \mathrm{d}\mu \mathrm{d}\sigma_x\mathrm{d}s \le C_{H,z,t}.
\end{align}
Similarly,  for any $t>0$,  integrating \eqref{boltzmann-approximate-lions-entropy} over $\mathcal{O}_t$, we have
	\begin{align}
	\label{est-boltz-app-lions-entropy}
	\begin{split}
	& \sup\limits_{0\le s \le t}\int_{\mathcal{O}}  F^n\log F^n(s) \mathrm{d}v \mathrm{d}x + \int_{0}^{t}\int_{\Sigma_+}\gamma_+ (F^n \log F^n)(s) \mathrm{d}\mu \mathrm{d}\sigma_x\mathrm{d}s\\
	& + \int_{0}^{t}  \mathcal{D}(F^n)(s) \mathrm{d}s
	\le     \int_{\mathcal{O}} F_0^n\log F^n_0 \mathrm{d}v \mathrm{d}x + \int_{0}^{t}\int_{\Sigma_-}  Z\log Z(s) \mathrm{d}\mu \mathrm{d}\sigma_x\mathrm{d}s  .
	\end{split}
	\end{align}
With the help of  \eqref{est-app-lions-density+energy} and \eqref{est-boltz-app-lions-entropy}, by simple calculation, we find that
	\begin{align}
	\label{est-boltz-app-lions-entropy-abs}
	\begin{split}
	 \sup\limits_{0\le s \le t}\int_{\mathcal{O}}  F^n|\log F^n|(s) \mathrm{d}v \mathrm{d}x + \int_{0}^{t}\int_{\Sigma_+}\gamma_+F^n |\gamma_+ \log F^n| \mathrm{d}\mu \mathrm{d}\sigma_x\mathrm{d}s  \le C(t).
	\end{split}
	\end{align}		
Thus, the life span  $T^n$   in Theorem \ref{theorem-boltzmann-approximate-lions-local} can be extended  to any $T>0$.

\end{proof}

\section{weak compactness and global existence}
In this section, we  prove Theorem \ref{main-result-cut}. First,  we summarize all these estimates on $F^n$ up
\begin{align}
\label{conservations-cut}
\sup_{0\le s \le T} \int F^n(s)(1   + |v|^2 + |\log F^n|) \mathrm{d}v \mathrm{d}x  \le C(T),
\end{align}
and
\begin{align}
\label{conservation-boundary-cut}
\int_{0}^{t}\int_{\Sigma_\pm} (1 + |v|^2 + |\log \gamma_\pm F^n|)\gamma_\pm F^n \mathrm{d}\mu \mathrm{d}\sigma_x \mathrm{d}s \le C(T).
\end{align}
From estimates \eqref{conservations-cut}, for any fixed $T>0$, using Dunford-Pettis Lemma, it follows that $\{F^n\}$ and $\{\gamma_+ F^n\}$ are weakly compact sequence in $L^\infty((0,T);L^1(\mathcal{O}))$ and $L^1(0,T)\times\Sigma_+)$. Further, taking the whole sequence as example, there exists some $f \in L^1 ((0,T) \times \mathcal{O})$ such that
\begin{align}
\label{weak-convergence-f-cut}
F^n \rightharpoonup F,~~\text{in}~~L^1 ((0,T) \times \mathcal{O}).
\end{align}

The proof of Theorem \ref{main-result-cut} is split into two parts: the interior domain part and the boundary part.
\subsection{Interior domain}

\begin{theorem}[Extended Stability \cite{diperna-lions1989cauchy,lions1994compact}]
	\label{theorem-interior-cut}
	Let $B$ in \eqref{collision-original} satisfy the assumptions \eqref{Grad-cutoff-1} and \eqref{Grad-cutoff-2}.  Let $\{F^n\}$ be a sequence of   solutions obtained in Theorem \ref{theorem-boltzmann-approximate-lions-global} to the  approximate Boltzmann equation \eqref{boltzmann-approximate-lions}. Then
	\begin{itemize}
		\item $F^n \to F$ in  $L^p([0,T], L^1(\mathcal{O}))$, $p \ge 1$;
		\item for all nonlinearity $\beta \in C^1(\mathbb{R}_+, \mathbb{R}_+)$ satisfying
		\[   0 \le   \beta'(F)   < C { (1 +f)^{-1}},    \]	
		then
		\[   \beta'(F) \Q(F,F) \in L^1 ((0,T)\times\mathcal{O}),   \]
		and
		\[   \partial_t \beta(F) + v \cdot \nabla_x \beta(F) = \beta'(F)\Q(F,F),   \]
		holds in the sense of distribution.
		\item for all $\phi \in \mathcal{D}([0,T]\times\bar{\Omega}\times\mathbb{R}^3)$,
		\begin{align*}
		\int_{0}^{t}\int_\mathcal{O} \Q^n(F^n,F^n)\beta'(F^n) \phi \bd v \bd x \bd s \to \int_{0}^{t}\int_\mathcal{O} \Q(F,F)\beta'(F) \phi \bd v \bd x \bd s,\qquad t \le T.
		\end{align*}
	\end{itemize}
\end{theorem}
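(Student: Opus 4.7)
The plan is to adapt the DiPerna-Lions stability argument from \cite{diperna-lions1989cauchy, lions1994compact} to this bounded-domain setting, relying on the uniform bounds \eqref{conservations-cut} and \eqref{conservation-boundary-cut} together with the $H$-dissipation estimate $\int_0^T \mathcal{D}(F^n)\,\bd s \le C(T)$ extracted from \eqref{est-theorem-app-lions-entropy}. By Dunford-Pettis, the sequence $\{F^n\}$ is equi-integrable and a weak $L^1$ limit $F$ exists as in \eqref{weak-convergence-f-cut}.

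First I would upgrade the weak convergence $F^n \weak F$ to strong convergence in $L^p((0,T); L^1(\mathcal{O}))$. The main tool is the velocity averaging lemma applied to the family of renormalized equations
\[
\partial_t \beta_j(F^n) + \vdot \beta_j(F^n) = \beta_j'(F^n)\,\Q^n(F^n, F^n), \qquad \beta_j(s)=\tfrac{js}{j+s},
\]
whose right-hand sides are uniformly bounded in $L^1$ thanks to the DiPerna-Lions truncation $(1+\frac{1}{n}\int F^n\,\bd v)^{-1}$ in $\Q^n$ combined with $\beta_j'(F^n) \le 1$. This yields strong $L^p_{loc}$ compactness of the velocity averages $\int \beta_j(F^n)\varphi(v)\,\bd v$ for every $\varphi \in C_c(\mathbb{R}^3)$. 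Combining with equi-integrability and the product limit theorem of \cite{lions1994compact} gives strong $L^1((0,T)\times\mathcal{O})$ convergence of $\beta_j(F^n)$; a diagonal extraction in $j\to\infty$, again using equi-integrability, upgrades this to strong $L^1$ convergence of $F^n$ itself, and interpolation with the $L^\infty_t L^1_{x,v}$ bound yields the $L^p_t L^1_{x,v}$ statement.

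For the second bullet I would decompose $\Q(F,F) = \Q^+(F,F) - F\,\overline{A}(F)$, with $\overline{A}(F)(v) = \int A(v-v_*) F(v_*)\,\bd v_*$. Since $\beta'(s) \le C(1+s)^{-1}$ gives $\beta'(F)\,F \le C$, the loss term obeys $\beta'(F)\,F\,\overline{A}(F) \le C\,\overline{A}(F)$, which is integrable by the mild-growth condition \eqref{Grad-cutoff-2} and the mass-plus-energy bound. The gain term is handled by the standard DiPerna-Lions dichotomy: splitting according to whether the ratio $F'F'_*/(FF_*)$ is bounded or not, one region is controlled by $CF\overline{A}(F)$ and the other by the $H$-dissipation $\mathcal{D}(F)$ via the elementary inequality $(a-b)\log(a/b) \ge 0$, yielding $\beta'(F)\Q^+(F,F) \in L^1$. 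Multiplying the limit equation by $\beta'(F)$ distributionally then produces the renormalized formulation.

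The third step is the passage to the limit in $\int_0^t\int_\mathcal{O}\Q^n(F^n,F^n)\beta'(F^n)\phi\,\bd v\,\bd x\,\bd s$. On the loss side, $\beta'(F^n)F^n$ is uniformly bounded in $L^\infty$ and converges a.e.\ to $\beta'(F)F$ by the strong $L^1$ convergence of step one, while $\overline{A}_n(F^n) \to \overline{A}(F)$ in $L^1_{loc}$ via velocity averaging applied to the compactly supported kernel $A_n$; Vitali's theorem, justified by equi-integrability, passes the product to the limit against any $\phi \in \mathcal{D}([0,T]\times\bar\Omega\times\mathbb{R}^3)$. On the gain side, the pre/post collisional change of variables rewrites $\beta'(F^n)\Q^{+,n}(F^n,F^n)\phi$ as a bilinear velocity average against the compactly supported kernel $B_n$, and the same strong convergence plus averaging argument, together with $B_n\to B$ pointwise and $(1+\frac{1}{n}\int F^n\,\bd v)^{-1}\to 1$ in $L^\infty_{t,x}$, delivers the required $L^1$ convergence. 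I expect the main obstacle to be the strong $L^1$ compactness in the first step: velocity averaging alone controls only moments in $v$, and to rule out oscillations of $F^n$ in velocity one must genuinely exploit the entropy dissipation bound together with the precise structure of the DiPerna-Lions truncation. Once this compactness is in place, renormalization and passage to the limit in the collision operator follow the classical scheme essentially verbatim.
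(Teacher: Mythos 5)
Your proposal is essentially the paper's own proof: the authors simply cite \cite{diperna-lions1989cauchy,lions1994compact} for the first two items and note that the third follows from the DiPerna--Lions argument once strong $L^1$ convergence is in hand, and your sketch is an accurate unpacking of exactly that machinery (velocity averaging for the renormalized truncated equations, the gain/loss splitting with the entropy-dissipation control of $\Q^+$, and Vitali/product-limit arguments for the collision term). You also correctly flag that the genuinely hard step --- upgrading to strong $L^1$ compactness in the velocity variable --- is precisely what Lions' 1994 compactness results for the gain operator supply, which is why that reference appears in the theorem's attribution.
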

\begin{proof}
	The first two items directly come from \cite{diperna-lions1989cauchy,lions1994compact}. With the strong convergence of $\{F^n\}$ in $L^1$ space, the third can be verified by the argument in \cite{diperna-lions1989cauchy} too.
\end{proof}

\subsection{Boundary Parts} From Theorem \ref{theorem-interior-cut}, the solution $F^n$ satisfies
		\[   \partial_t \beta(F^n) + v \cdot \nabla_x \beta(F^n) = \beta'(F^n)\Q(F^n,F^n),~~\text{in}~~ \mathcal{D}'((0,T)\times\mathcal{O}).    \]
By Lemma \ref{green-formula-Lp}, we can only get the trace of $\beta(F)$ other than that of $f$. Recall
	\begin{align}
\label{est-app-lions-density-cp}
\begin{split}
& \int_{\mathcal{O}} F^n(t) \mathrm{d}v \mathrm{d}x + \int_{0}^{t}\int_{\Sigma_+}   \gamma_+ F^n \mathrm{d}\mu \mathrm{d}\sigma_x\mathrm{d}s \\
& =    \int_{\mathcal{O}}  F_0 \mathrm{d}v \mathrm{d}x + \int_{0}^{t}\int_{\Sigma_-}   Z \mathrm{d}\mu \mathrm{d}\sigma_x\mathrm{d}s  , ~ t > 0.
\end{split}
\end{align}
Since $F^n \to F$ in  $L^\infty([0,T], L^1(\Omega\times\mathbb{R}^3))$,
we get
\begin{align*}
\int_{\mathcal{O}} F^n(s) \mathrm{d}v \mathrm{d}x \to \int_{\mathcal{O}} F(s) \mathrm{d}v \mathrm{d}x,~~ s \le T.
\end{align*}
Then according to \eqref{conservations-cut}, by Dunford-Pettis Lemma,   we can infer: There exists   $f_\gamma \in L^1((0,T)\times\Sigma_+)$ such that
\begin{align*}
& \gamma_+ F^n \rightharpoonup F_\gamma,~~\text{in}~~ L^1((0,T) \times \Sigma_+),\\
& \int_{0}^{t}\int_{\Sigma_+}   \gamma_+ F^n \mathrm{d}\mu \mathrm{d}\sigma_x\mathrm{d}s \to  \int_{0}^{t}\int_{\Sigma_+}    F_\gamma \mathrm{d}\mu \mathrm{d}\sigma_x\mathrm{d}s.
\end{align*}
Thus,  we infer that
\begin{align}
\label{est-lions-density-0}
\begin{split}
& \int_{\mathcal{O}} F \mathrm{d}v \mathrm{d}x + \int_{0}^{t}\int_{\Sigma_+}   F_\gamma \mathrm{d}\mu \mathrm{d}\sigma_x\mathrm{d}s \\
& =    \int_{\mathcal{O}}  F_0 \mathrm{d}v \mathrm{d}x + \int_{0}^{t}\int_{\Sigma_-}   Z \mathrm{d}\mu \mathrm{d}\sigma_x\mathrm{d}s , ~ t > 0.
\end{split}
\end{align}
 	In fact, we will show that $F_\gamma$ satisfies \eqref{est-definition-trace-of-renormalized-cutoff}, namely that $F_\gamma$ is the trace of $F$ on $\Sigma_+$, $\gamma_+ F  : = F_\gamma$.

\begin{theorem}
	\label{theorem-boundary}
	Assume that $\beta \in C^1[0,+\infty)$ be a  non-linear function with $\beta'(x) \ge 0$ and $\sup\limits_{x\ge 0}(1+x) \beta'(x) < \infty $. Then for  any $\phi \in \mathcal{D}([0,T]\times\bar{\Omega}\times\mathbb{R}^3)$ and any $T>0$,
	\begin{align*}
	&\int_{0}^{T} \int_{\mathcal{O}} \big( \beta(F)(\partial_t \phi  + v \cdot \nabla_x \phi) +   \Q(f, f)\beta'(F) \phi\big) \mathrm{d}v \mathrm{d}x \mathrm{d}t \\
	& =   \int_{\mathcal{O}} \beta(F)(T) \phi  \mathrm{d}v \mathrm{d}x - \int_{\mathcal{O}} \beta(F_0) \phi(0)  \mathrm{d}v \mathrm{d}x  \\
	& + \int_{0}^{T}  \int_{\Sigma_+} \beta(F_\gamma) \phi \mathrm{d}\mu \mathrm{d}\sigma_x \mathrm{d}t - \int_{0}^{T}  \int_{\Sigma_-} \beta(Z) \phi \mathrm{d}\mu \mathrm{d}\sigma_x \mathrm{d}t.
	\end{align*}	
	
\end{theorem}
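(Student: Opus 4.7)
The plan is to prove the Green formula by truncating $F$ with $\beta_j(x) = jx/(j+x)$, so that for each $j$ we can work in the $L^\infty$ framework of Lemma~\ref{green-formula-L-infty-defect}, and then pass $j \to \infty$ by monotone convergence. The identification of the resulting pointwise limit with the weak $L^1$ limit $F_\gamma$ will use Mischler's r-convergence (Proposition~\ref{proposition-r-convergence}).

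For each $n$ and $j$, the truncated solution $\beta_j(F^n) \in L^\infty$ satisfies
\[
\partial_t \beta_j(F^n) + v\cdot\nabla_x\beta_j(F^n) = \beta_j'(F^n)\Q^n(F^n, F^n),
\]
with $L^1$ right-hand side, so Lemma~\ref{green-formula-L-infty-defect} provides a trace $\gamma_\pm \beta_j(F^n) = \beta_j(\gamma_\pm F^n)$ (Remark~\ref{remark-renormlized}). I pass $n \to \infty$ with $j$ fixed: Theorem~\ref{theorem-interior-cut} yields strong $L^1$ convergence $\beta_j(F^n) \to \beta_j(F)$ and $\beta_j'(F^n)\Q^n(F^n, F^n) \to \beta_j'(F)\Q(F,F)$ in $L^1$, so $\beta_j(F) \in L^\infty$ inherits the same transport equation with $L^1$ source. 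By Lemma~\ref{green-formula-L-infty-defect} applied to $\beta_j(F)$, there is a unique trace $H_j := \gamma_+\beta_j(F)$; since $\beta_j(\gamma_+F^n) \le j$ a.e., uniqueness forces the whole sequence $\beta_j(\gamma_+F^n) \rightharpoonup^\ast H_j$ weakly-$\ast$ in $L^\infty$. Applying the monotonicity part of Lemma~\ref{green-formula-L-infty-defect} to $\beta_j(F^n) \le \beta_{j+1}(F^n)$ gives $H_j \le H_{j+1}$ a.e.

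To identify $\sup_j H_j$ with $F_\gamma$, I invoke Proposition~\ref{proposition-r-convergence}: the weak $L^1$ convergence $\gamma_+F^n \rightharpoonup F_\gamma$ implies $\gamma_+F^n \rcong F_\gamma$, that is, with $\alpha_m = \beta_m$ as the renormalizing sequence, the weak-$\ast$ limits $H_m$ of $\beta_m(\gamma_+F^n)$ satisfy $H_m \uparrow F_\gamma$ a.e. Setting $\gamma_+F := F_\gamma$, I obtain the consistency relation $\gamma_+\beta_j(F) = H_j \uparrow \gamma_+F$.

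For general $\beta$ satisfying $\beta' \ge 0$ and $\sup_{x\ge 0}(1+x)\beta'(x)<\infty$, I apply Lemma~\ref{green-formula-L-infty-defect} to $\beta_j(F) \in L^\infty$ with renormalizer $\beta$; the resulting Green formula for $\beta\circ\beta_j(F)$ has boundary contribution $\int \beta(H_j)\phi\,\bd\mu\bd\sigma_x\bd t$ (by Remark~\ref{remark-renormlized} applied to $\beta_j(F)$). As $j \to \infty$: $\beta\circ\beta_j(F) \uparrow \beta(F)$ by monotone convergence (controlling the drift term); $(\beta\circ\beta_j)'(F)\Q(F,F) \to \beta'(F)\Q(F,F)$ in $L^1$ by DCT, using the uniform bound $(1+x)(\beta\circ\beta_j)'(x) \le C'$ and the DiPerna--Lions estimate $|\Q(F,F)|/(1+F) \in L^1$; initial and terminal integrals converge by MCT; and $\beta(H_j) \uparrow \beta(F_\gamma)$ a.e.\ with integrable majorant $\beta(F_\gamma)$ (by the entropy bound \eqref{conservation-boundary-cut} and the growth $\beta(x) \le C\log(1+x) + \beta(0)$). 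This yields the Green formula of Theorem~\ref{theorem-boundary} with $\beta(F_\gamma)$ on $\Sigma_+$. The main obstacle is the identification step: upgrading the naive concave upper bound $H_j \le \beta_j(F_\gamma)$ (which would follow from weak semi-continuity alone) to the pointwise monotone identity $H_j \uparrow F_\gamma$, which is exactly r-convergence and rules out concentration of mass in the weak limit of the traces.
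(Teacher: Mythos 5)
Your proposal is correct and follows essentially the same route as the paper's proof: truncation by the concave renormalizers $\beta_j(x)=jx/(j+x)$, extraction of the monotone family of weak limits $F_{\gamma,j}=\gamma_+\beta_j(F)$ via the $L^\infty$ Green formula and its monotonicity (Lemma \ref{green-formula-L-infty-defect}), identification of $\sup_j F_{\gamma,j}$ with the weak $L^1$ limit $F_\gamma$ through r-convergence (Proposition \ref{proposition-r-convergence}), and passage $j\to\infty$ in the renormalized Green formula by monotone and dominated convergence with the majorant $C|\Q(F,F)|/(1+F)$. The only cosmetic difference is that you pass $n\to\infty$ at the level of the transport equation and invoke uniqueness of the trace of $\beta_j(F)$, whereas the paper passes $n\to\infty$ directly in the Green identity; these are equivalent.
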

\begin{proof}
	
According to Theorem \ref{theorem-boltzmann-approximate-lions-global}, for all $T>0$, as long as $0 \le \beta'(x) \le C(1+x)^{-1}$, then for $\phi \in \mathcal{D}([0,T]\times\bar{\Omega}\times\mathbb{R}^3)$,
	\begin{align*}
	&\int_{0}^{T} \int_{\mathcal{O}} \big( \beta(F^n)(\partial_t \phi  + v \cdot \nabla_x \phi) + \Q^n(F^n,F^n) \beta'(F^n)\phi\big) \mathrm{d}v \mathrm{d}x \mathrm{d}t \\
	& =   \int_{\mathcal{O}}  \phi(T) \beta(F^n)(T) \mathrm{d}v \mathrm{d}x   -    \int_{\mathcal{O}}  \phi(0) \beta(F^n_0)  \mathrm{d}v \mathrm{d}x   \\
	&  + \int_{0}^{T}  \int_{\Sigma_+} \phi \beta(\gamma_+ F^n) \bd \mu \bd \sigma_x  \bd t - \int_{0}^{T}  \int_{\Sigma_-} \beta(Z) \phi  \bd \mu \bd \sigma_x  \bd t,
	\end{align*}

	Choose a sequence of concave function $\beta_j(x) = \frac{x}{ 1 + \frac{x}{j}}$, obviously,
	\[  \beta_j(f) \le j, ~\beta_n(0) =0,~~ 0<\beta_j'= \tfrac{j^2}{ (j + {f})^2}.    \]
	Besides,
	\[ 0 \le \beta_j(\gamma_+ F^n ) \le \gamma_+ F^n,  \]
	since $\{\gamma_+ F^n\}$ is a weakly compact sequence, for any fixed $j \in \mathbb{N}^+$,   there exists  $F_{\gamma,j} \in L^1((0,T)\times\Sigma_+)$ such that
	\[ \beta_j(\gamma_+ F^n) \rightharpoonup F_{\gamma, j}, ~~\text{in}~~L^1((0,T)\times\Sigma_+).  \]	
	Let $n$ go to infinity, with the help of Theorem \ref{theorem-interior-cut},
	\begin{align*}
	&\int_{0}^{T} \int_{\mathcal{O}} \big( \beta_j(F)(\partial_t \phi  + v \cdot \nabla_x \phi) + \Q(F,F) \beta'_j(F)\phi\big) \mathrm{d}v \mathrm{d}x \mathrm{d}t \\
	& =   \int_{\mathcal{O}}  \phi(T) \beta_j(f)(T) \mathrm{d}v \mathrm{d}x   -    \int_{\mathcal{O}}  \phi(0) \beta_j(F_0)  \mathrm{d}v \mathrm{d}x   \\
	&  + \int_{0}^{T}  \int_{\Sigma_+} \phi f_{\gamma,j} \bd \mu \bd \sigma_x  \bd t - \int_{0}^{T}  \int_{\Sigma_-} \beta_j(Z) \phi  \bd \mu \bd \sigma_x  \bd t,
	\end{align*}
	Noticing that $\beta_j(x) \le j$, then by Lemma \ref{green-formula-L-infty-defect}, $\gamma_+ \beta_j(f) = f_{\gamma,j}$ and
	\begin{align*}
	F_{\gamma, j + 1} \ge F_{\gamma, j},~~ a.e.  ~~ \text{on}~~ (0,T)\times\Sigma_+.
	\end{align*}
	Furthermore,  for all $\beta \in C^1(\mathbb{R}_+)$ with $0 \le \beta'(x)$ and $ \sup_{x \ge 0}(1 +x)\beta'(x) < \infty$,
	\begin{align*}
	&\int_{0}^{T} \int_{\mathcal{O}} \big( \beta(\beta_j (F))(\partial_t \phi  + v \cdot \nabla_x \phi) + \Q(F,F) \beta'_j(F)\beta'(\beta_j(f))\phi\big) \mathrm{d}v \mathrm{d}x \mathrm{d}t \\
	& =   \int_{\mathcal{O}}  \phi(T) \beta(\beta_j(F))(T) \mathrm{d}v \mathrm{d}x   -    \int_{\mathcal{O}}  \phi(0) \beta(\beta_j(F_0))  \mathrm{d}v \mathrm{d}x   \\
	&  + \int_{0}^{T}  \int_{\Sigma_+} \phi \beta(F_{\gamma,j}) \bd \mu \bd \sigma_x  \bd t - \int_{0}^{T}  \int_{\Sigma_-} \beta(\beta_j(Z)) \phi  \bd \mu \bd \sigma_x  \bd t,
	\end{align*}
	
	Recalling for any $j \in \mathbb{N}^+$, $ \beta(\beta_j(F))  \le    \beta(F)$,  by Lebesgue dominated convergence theorem,
	\[   \beta(\beta_j(F))  \to   \beta(F), ~~\text{in}~~  L^1((0,T)\times\mathcal{O}; \mathrm{d}v \mathrm{d}x \mathrm{d}t), \]
	and
	\[   \beta(\beta_j(F)) \to  \beta(F),  ~~\text{in}~~  L^\infty((0,T); L^1(\mathcal{O}; \mathrm{d} x\mathrm{d} v)). \]
	For the collision term, recalling that there exist a constant such that
	\[ |\beta'(\beta_j(F))| \le C (1 + \beta_j(F))^{-1},   \]
	then
	\begin{align*}
	|\Q(F,F) \beta'_j(F)\beta'(\beta_j(F))| & =  |\Q(F,F)\big( 1 + \frac{F}{n} \big)^{-2}\beta'(\beta_j(F))|\\
	& \le  C | \frac{\Q(F,F)}{1 + F} \frac{1 + F}{(1 + \frac{F}{n})^2} \times \frac{1}{ 1 + \tfrac{F}{1 + \tfrac{F}{n}} }\\
	& \le C | \frac{\Q(F,F)}{1 + F} \frac{1 + F}{(1 + \frac{F}{n})^2 + f( 1 + \frac{F}{n} )}|.  \\
	& \le C | \frac{\Q(F,F)}{1 + F}|.
	\end{align*}
	It follows that
	\[  \Q(F,F) \beta'_j(F)\beta'(\beta_j(F)) \to  \Q(F,F)\beta'(F),~~ a.e. \text{on}~~{\mathcal{O}_T},  ~~ \text{as}~~ j \to \infty.\]
	Thus, by  Lebesgue dominated convergence theorem again
	\begin{align*}
	\int_{0}^{T} \int_{\mathcal{O}}  \Q(F,F) \beta'_j(F)\beta'(\beta_j(F))\phi  \mathrm{d}v \mathrm{d}x \mathrm{d}t \to \int_{0}^{T} \int_{\mathcal{O}}  \Q(F,F)  \beta'(F)\phi  \mathrm{d}v \mathrm{d}x \mathrm{d}t.
	\end{align*}
	According to the definition   \eqref{def-r-convergence} and recalling
\[  \beta_j(F) \le j, ~\beta_n(0) =0,~~ 0<\beta_j'= \frac{j^2}{ (j + {F})^2},   \]	
	based on the above analysis, we can infer that $\{\beta_j\}$ is a renormalizing sequence and $\{F_{\gamma, j}\}$ is a increasing sequence on $(0,T)\times\Sigma_+$, together with $ \gamma_+ F^n \rightharpoonup f_\gamma$($L^1((0,T)\times\Sigma_+)$), by virtual of Proposition \ref{proposition-r-convergence},  we conclude
	\begin{align*}
	F_{\gamma, j} \uparrow F_\gamma,~~ \text{on}~~ (0,T)\times\Sigma_+.
	\end{align*}
	Let $j \to + \infty$, finally, we obtain
	\begin{align*}
	&\int_{0}^{T} \int_{\mathcal{O}} \big( \beta(F)(\partial_t \phi  + v \cdot \nabla_x \phi) +   \Q(F, F)\beta'(F) \phi\big) \mathrm{d}v \mathrm{d}x \mathrm{d}t \\
	& =   \int_{\mathcal{O}} \beta(F)(T) \phi  \mathrm{d}v \mathrm{d}x - \int_{\mathcal{O}} \beta(F_0) \phi(0)  \mathrm{d}v \mathrm{d}x  \\
	& + \int_{0}^{T}  \int_{\Sigma_+} \beta(F_\gamma) \phi \mathrm{d}\mu \mathrm{d}\sigma_x \mathrm{d}t - \int_{0}^{T}  \int_{\Sigma_-} \beta(z) \phi \mathrm{d}\mu \mathrm{d}\sigma_x \mathrm{d}t.
	\end{align*}
	This means
	\begin{align}
	\label{concave-equal}
	\gamma_+ F  = F_\gamma, ~~ \gamma_- F =Z.
	\end{align}
	\end{proof}
  Then \eqref{est-lions-density-0} becomes
\begin{align}
\label{est-lions-density}
\begin{split}
& \int_{\mathcal{O}} F \mathrm{d}v \mathrm{d}x + \int_{0}^{t}\int_{\Sigma_+}   \gamma_+ F  \xmus \\
& =    \int_{\mathcal{O}}  F_0 \mathrm{d}v \mathrm{d}x + \int_{0}^{t}\int_{\Sigma_-}   Z \xmus , ~ t > 0.
\end{split}
\end{align}
As for the energy inequality, recalling that
\[ \gamma_+ F^n \rightharpoonup \gamma_+ F, ~\text{in}~ L^1((0,T)\times\Sigma_+),  \]
then  for any fixed $m\in \mathbb{N}^+$, denoting by $\mathbf{1}_m$ the characteristic function of ball in $\mathbb{R}^3$ with radius $m$,  $\{v : |v| \le m  \}$, we can infer that
\[ |v|^2 \mathbf{1}_{m}\gamma_+ F^n \rightharpoonup  |v|^2 \mathbf{1}_{m} \gamma_+ F, ~\text{in}~ L^1((0,T)\times\Sigma_+),  \]
and
\[ |v|^2 \mathbf{1}_{m}  F^n \rightharpoonup  |v|^2 \mathbf{1}_{m}   F, ~\text{in}~ L^\infty((0,T); L^1(\mathcal{O})).  \]
By the lower semi-continuity of norm,
\begin{align*}
\begin{split}
& \sup_{0\le s \le t}\int_{\mathcal{O}} \mathbf{1}_{m} |v|^2 F(s) \mathrm{d}v \mathrm{d}x + \int_{0}^{t}\int_{\Sigma_+} \mathbf{1}_{m}  |v|^2 \gamma_+ F  \dd \mu \mathrm{d}\sigma_x\mathrm{d}s \\
& \le    \int_{\mathcal{O}} |v|^2F_0 \mathrm{d}v \mathrm{d}x + \int_{0}^{t}\int_{\Sigma_-} |v|^2 Z \dd \mu \mathrm{d}\sigma_x\mathrm{d}s, \qquad t > 0.
\end{split}
\end{align*}
Taking $m$ to infinity, by Fatou lemma,  we deduce
\begin{align}
\label{est-lions-energy}
\begin{split}
& \sup_{0\le s \le t}\int_{\mathcal{O}} |v|^2f(s) \mathrm{d}v \mathrm{d}x + \int_{0}^{t}\int_{\Sigma_+}  |v|^2 \gamma_+ F  \dd \mu \mathrm{d}\sigma_x\mathrm{d}s \\
& \le    \int_{\mathcal{O}} |v|^2F_0 \mathrm{d}v \mathrm{d}x + \int_{0}^{t}\int_{\Sigma_-} |v|^2 Z \xmus, \qquad t > 0.
\end{split}
\end{align}
For the relative entropy inequality, noticing that $h(z)$ is a positive convex function, by the lower semi-continuity of convex functions with respect to weak convergence, we deduce that
\begin{align}
\label{est-lions-entropy}
\begin{split}
& H(F| \mathrm{M} ) + \int_{0}^{t}\int_{\Sigma_+}h(\gamma_+ F/\mathrm{M} ) \dd \mu \mathrm{d}\sigma_x\mathrm{d}s +  \int_{0}^t \mathcal{D}(F)(s) \mathrm{d}s  \\
&\le  H(F_0| \mathrm{M} )  + \int_{0}^{t}\int_{\Sigma_-}h(Z| \mathrm{M} ) \dd \mu \mathrm{d}\sigma_x\mathrm{d}s,\qquad t \ge 0.
\end{split}
\end{align}

\subsection{Local conservation laws}
In this subsection, we focus on the  local conservation laws.
\subsubsection{Local   conservation law of mass}
Choosing some function $\phi_1 \in \mathcal{D}([0,T]\times\bar{\Omega})$, multiplying the first equation \eqref{boltzmann-approximate-lions} by $\phi_1$, integrating by parts, then we have
\begin{align*}
\begin{split}
& \int_\mathcal{O} F^n(T,x,v) \phi_1(T,x,v) \bd v \bd x - \int_\mathcal{O} F^n(0,x,v) \phi_1(0,x) \bd v \bd x  - \int_{0}^{T} \int_\mathcal{O} F^n(s,x,v) \partial_t \phi_1(s,x) \bd v \bd x \bd s\\
& = \int_{0}^{T} \int_\mathcal{O} F^n(s,x,v) v \cdot \nabla \phi_1(s,x) \bd v \bd x \bd s - \int_{0}^{T} \int_\Sigma \gamma F^n(s,x,v)   \phi_1(s,x)   \mathrm{n}(x) \cdot v \bd v \bd\sigma_x \bd s.
\end{split}
\end{align*}
As
\[ (1+ |v|) F^n \to (1 + |v|) F,~~\text{in}~~ L^1\big((0,T) \times \Omega\big), \]
and
\[ \gamma_\pm F^n \weak \gamma_\pm F,~~\text{in}~~ L^1\big((0,T) \times \Sigma_\pm\big), \]
taking $n\to \infty$, we have
\begin{align*}
\begin{split}
& \int_\mathcal{O} F(T,x,v) \phi_1(T,x) \bd v \bd x - \int_\mathcal{O} f(0,x,v) \phi_1(0,x) \bd v \bd x  - \int_{0}^{T} \int_\mathcal{O} f(s,x,v) \partial_t \phi_1(s,x) \bd v \bd x \bd s\\
& = \int_{0}^{T} \int_\mathcal{O} F(s,x,v) v \cdot \nabla \phi_1(s,x) \bd v \bd x \bd s - \int_{0}^{T} \int_{\Sigma_\pm} \gamma F(s,x,v)   \phi_1(s,x)  \mathrm{n}(x) \cdot v \bd v \bd\sigma_x \bd s.
\end{split}
\end{align*}
Noticing that $\phi_1$ is independent of $v$, then it can be rewritten as
\begin{align}
\label{commutative}
\begin{split}
& \int_\Omega \phi_1(T,x) \cdot \big( \int_{\mathbb{R}^3}F \bd v \big)(T,x)  \bd x - \int_\Omega \phi_1(0,x) \cdot \big( \int_{\mathbb{R}^3}F \bd v \big)(0,x)  \bd x  \\
& - \int_{0}^{T} \int_{\partial\Omega} \partial_t \phi_1(s,x) \cdot \big( \int_{\mathbb{R}^3} F\dd v \big)(s,x) \bd x \bd s\\
& = \int_{0}^{T} \int_{\Omega} \nabla \phi_1(s,x) \cdot  \big( \int_{\mathbb{R}^3} F v \bd v\big)(s,x) \bd x \bd s - \int_{0}^{T} \int_{\partial\Omega}  \mathrm{n}(x) \cdot \big(\int_{\mathbb{R}^3} \gamma F v\dd v \big)(s,x) \phi_1(s,x) \bd\sigma_x \bd s.
\end{split}
\end{align}
If $\phi_1(s,x)|_{\partial\Omega} = 0$ for any $ 0 \le s \le T$ and $\phi_1(0,x)=\phi_1(T,x)=0$, we can conclude: In the distribution sense
\begin{align*}
\partial_t \int_{\mathbb{R}^3} F^n \mathrm{d}v + \nabla \cdot \int_{\mathbb{R}^3} v F^n  \mathrm{d}v =0.
\end{align*}
	As for the local  conservation law of mass, similar to Lemma \ref{green-formula-Lp},  we can use the Green formula to define the trace of $\int_{\mathbb{R}^3} v F \mathrm{d} v$ on $\partial\Omega$,   Denoting it by $\gamma_x(\int_{\mathbb{R}^3} v f\mathrm{d} v)$.
 	
From \eqref{commutative},  we can infer that
\begin{align*}
\mathrm{n}(x) \cdot  \gamma_x(\int_{\mathbb{R}^3} v F \mathrm{d} v) & =
\int_{\Sigma_+^x} \gamma_+ F   | \mathrm{n}(x) \cdot v  |\mathrm{d} v  - \int_{\Sigma_-^x} \gamma_- F |\mathrm{n}(x) \cdot v| \mathrm{d} v,\\
& = \mathrm{n}(x) \cdot  \int_{\mathbb{R}^3} v \gamma f\mathrm{d} v.
\end{align*}
This means that the trace operator $\gamma$ is commutative with integral operator $\int$. This is because that the traces of solutions have the full estimate.  Besides,   the estimates of $\gamma F$ gives,
$$\mathrm{n}(x)\gamma_x(\int_{\mathbb{R}^3} F(s) v \mathrm{d} v)  \in L^1\big((0,T)\times\partial\Omega; \mathrm{d} \sigma_x \mathrm{d} s \big).$$
\subsubsection{Local  conservation law of momentum}Different with the local conservation law of mass, there exists defect measure in the conservation law of momentum. For any fixed $T>0$, multiplying the first equation of \eqref{boltzmann-approximate-lions} by $ v \phi_1$ with $\phi_1 \in \mathcal{D}((0,T)\times\Omega )$, we have after integrating by part
\begin{align}
\label{momentum-1}
\begin{split}
\int_{0}^{T} \int_\Omega \partial_t \phi_1(s) \int_{\mathbb{R}^3} v F^n(s)  \bd v \bd x \bd s +  \int_{0}^{T} \int_\Omega \nabla \phi_1(s) \int_{\mathbb{R}^3} F^n(s) v\otimes v \bd v \bd x \bd s = 0.
\end{split}
\end{align}
Recalling that \[ v F^n \rightharpoonup v F,~ \text{in} ~ L^\infty((0,T); L^1(\mathcal{O}));~~ F^n \to F,~~a.e~~\text{on}~~ {\mathcal{O}_T}, \]
by Vitalli convergence theorem, we can deduce
\[ v F^n \to v F,~ \text{in}  ~ L^\infty((0,T); L^1(\mathcal{O})). \]

Thus,
\[  \int_{0}^{T} \int_\Omega \partial_t \phi_1(s) \int_{\mathbb{R}^3} v F^n(s)  \bd v \bd x \bd s  \to  \int_{0}^{T} \int_\Omega \partial_t \phi_1(s) \int_{\mathbb{R}^3} v F(s)  \bd v \bd x \bd s.  \]
For the second term in \eqref{momentum-1},  the only thing  at our disposal is
\[ vF^n \to vF, ~~\text{in}~~ L^1((0,T)\times\mathcal{O}). \]
With these estimates, we can only prove that there exists  distribution-value matrix $\mathbf{W}$ with $\mathbf{W}_{i,j}( i, j =1, 2, 3) \in   \mathcal{D}'((0,T)\times\Omega)$  such that while $n \to \infty$
\begin{align*}
\int_{0}^{T} \int_\mathcal{O} F^n v\otimes v \cdot \nabla \phi_1 \bd v \bd x \bd s   \to  \int_{0}^{T} \int_\mathcal{O}   (F v\otimes v ) \cdot \nabla \phi_1\bd v \bd x \bd s + (\mathbf{W}, \nabla \phi_1),
\end{align*}
where $(\cdot,\cdot)$ denotes the action between distribution and test function.

Thus, we conclude the local conservation law of momentum.

\section{Proof of Theorem \ref{main-result}}
In this section, we prove Theorem \ref{main-result}. The proof   is  mainly made up of two parts. The first part consists in proving   the fluctuations $\{g_\epsilon\}$ tend to $g$ with form \eqref{limit-g}  in the interior domain. Besides, the coefficient $\uu$ and $\theta$ are weak solutions to   NSF equations. The proof of this part is the same as that in \cite{lm2010soft}.   The second part is devoted to show $\uu$ and $\theta$ satisfy    Dirichlet boundary conditions.

From now on, We  denote $\dd \varsigma = \mm |\mathrm{n}(x)\cdot v| \dd v$.  Here, we sketch the proof of the first part.  From \eqref{ns-scaling} and \eqref{be-f}, the equation of $g_\epsilon$ is
\begin{align*}
\begin{cases}
\epsilon \partial_t g_\epsilon + \vdot g_\epsilon + \frac{1}{\epsilon}\mathcal{L} g_\epsilon  =  \mathcal{B}(g_\epsilon,g_\epsilon),\\
\gamma_- g_\epsilon = z_\eps,\\
g_\epsilon(0,x,v)= \frac{1}{\epsilon}\frac{F_\epsilon^0 - \mm}{ \mm}\,,
\end{cases}
\end{align*}
where the scaled collision operator $\mathcal{B}$ is defined as
\begin{equation}\nonumber
  \mathcal{B}(G, G) = \tfrac{1}{\mathrm{M}}\Q(\mathrm{M}G, \mathrm{M}G)\,.
\end{equation}

Because of the order   $\frac{1}{\eps}$ before linear Boltzmann operator,  $g_\epsilon$ tends to an infinitesimal Maxwellian $g$ in the sense of distribution, the kernel space of linear  Boltzmann collisional operator $\mathcal{L}$,
\begin{align}
\label{g-limit}
g(t,x,v)= \rho(t,x) + \uu(t,x) \cdot v + \tfrac{\theta(t,x)}{2}(|v|^2 - 3).
\end{align}

According to the uniform relative entropy inequality\eqref{est-f-eps-entropy}, by the argument as \cite[Lemma 3.1.2]{saint2009book},  it follows that
\[ g_\eps \in L^\infty((0,+\infty),  L^1(\mathcal{O}, (1 + |v|^2) \mm \dd v \dd x), \]
and   $g_\eps$ belongs to $L^2$ space up to a $L^1$ perturbation of order $\eps$.
Moreover, $\{g_\epsilon\}$ is weakly compact in $L^\infty((0,+\infty),  L^1(\mathcal{O}, (1 + |v|^2) \mm \dd v \dd x)$. Thus,  there exist $\rho, u, \theta \in L^\infty((0,\infty); L^2(\Omega))$ such that
\begin{align}
\label{limit-g-p}
g(t,x,v) = \rho(t,x) + \uu(t,x) \cdot v + \tfrac{\theta(t,x)}{2}(|v|^2 - 3),
\end{align}
and
\[  g_\eps \weak g,~~ \text{in}~~L^\infty((0,+\infty),  L^1(\mathcal{O}, (1 + |v|^2) \mm \dd v \dd x).\]
Now, we turn to derive   equations of $\rho$, $u$ and $\theta$. Let
\begin{align*}
\beta(z) =  \tfrac{z-1}{ 1 + (z-1)^2}, ~~z>0.
\end{align*}
Denoting $\tilde{g}_\eps = \frac{1}{\eps} \beta(G_\eps )$ and $N_\eps = 1 + \eps^2 g_\eps^2$, then by simple computation
\[  \tilde{g}_\eps = \frac{g_\eps}{ N_\eps},~~ \eps g_\eps>-1.  \]
By the relative entropy estimates, for any $T>0$,
\begin{align}
\label{est-g-eps-non}
\tilde{g}_\eps \in L^\infty((0,T); L^2(\mm \dd v \dd x )).
\end{align}
By simple computation,
\[ \tilde{g}_\eps \weak g, ~~ \text{in}~~ L^\infty((0,T); L^2(\mm \dd v \dd x )).  \]
Denoting
\[\la f \ra := \int_{\mathbb{R}^3} f \mm \dd v,  \]
  the equations of $\rho$, $\uu$ and $\theta$ can be derived from equations of $\la \tilde{g}_\eps \ra$, $ \la v \tilde{g}_\eps \ra$ and $\la (\frac{|v|^2}{3} - 1 ) \tilde{g}_\epsilon \ra$. Indeed, by the existence Theorem \ref{main-result-cut}, for every $\eps >0$, the initial boundary problem (\ref{be-f}) admits  renormalized solutions $g_\eps$ such that
\begin{align}
\label{renormalize-non}
\partial_t \tilde{g}_\eps +   \tfrac{1}{\eps}v \cdot \nabla_x \tilde{g}_\eps = \tfrac{1}{\eps^3   } (\tfrac{2}{N_\eps^2} - \tfrac{1}{N_\eps})\B(G_\eps,G_\eps)
\end{align}
holds in the sense of distribution.

Multiplying the above equation by $1,v$ and $|v|^2$ respectively, we get the approximate conservation laws of moments:
\begin{align*}
\partial_t \la \tilde{g}_\eps\ra+ \tfrac{1}{\epsilon} {\rm div}_x \la v \tilde{g}_\eps \ra = \tilde{\mathrm{D}}(1),\\
\partial_t \la v \tilde{g}_\eps \ra + \tfrac{1}{\epsilon} {\rm div}_x \la \mathrm{A}(v)\tilde{g}_\eps \ra  +  \tfrac{1}{3\epsilon} \nabla_x \la |v|^2\tilde{g}_\eps \ra = \tilde{\mathrm{D}}(v ),\\
\partial_t \la (\tfrac{|v|^2}{3} - 1) \tilde{g}_\eps \ra + \tfrac{2}{3\epsilon} {\rm div}_x \la \mathrm{B}(v)\tilde{g}_\eps \ra  +  \tfrac{2}{3\epsilon} \nabla_x \la v\tilde{g}_\eps \ra = \tilde{\mathrm{D}}((\tfrac{|v|^2}{3} - 1) ),
\end{align*}
with
\begin{align*}
 \tilde{\mathrm{D}}(\xi ) = \tfrac{1}{\epsilon^3} \la   { \xi} (\tfrac{2}{N_\eps^2} - \tfrac{1}{N_\eps})\B(G_\eps,G_\eps) \ra.
\end{align*}
Moreover, from \cite[Sec. 6]{lm2010soft}, for every subsequence $\{g_{\eps_k}\}$ converging to $g$ in $L^\infty((0,+\infty),  L^1(\mathcal{O}, (1 + |v|^2) \mm \dd v \dd x)$   as $\eps_k \to 0$, the subsequence also enjoys
\begin{align*}
\mathbf{P}\la v g_{\epsilon_k} \ra \to \uu,~~\la (\tfrac{|v|^2}{5} - 1) \tilde{g}_\eps   \ra \to \theta,~~ \text{in} ~~ C([0,\infty)\,,  \mathcal{D}'(\mathbb{R}^3)),
\end{align*}
and in the distributional sense,
\[ \mathbf{P}(\tfrac{1}{\epsilon} {\rm div}_x \la \mathrm{A}(v) \tilde{g}_\eps \ra) \to \uu \cdot \nabla \uu -  \nu \Delta \uu,  \]
\[ \tfrac{2}{5\epsilon} {\rm div}_x \la \mathrm{B}(v)\tilde{g}_\eps \ra \to \uu\cdot \nabla \theta - \kappa \Delta \theta.  \]
Moreover, $\rho,  \uu, \theta$ in \eqref{limit-g-p} satisfy
\[ {\rm div} \uu = 0, ~ \rho + \theta = 0.\]
With the notation $q_\eps$ for $ \frac{1}{\eps^2   } (\frac{2}{N_\eps^2} - \frac{1}{N_\eps})\B(G_\eps,G_\eps)$,
\begin{align}
\label{limit-convergence-q}
q_\eps \rightharpoonup q =   v \cdot \nabla_x g, ~~\text{in} ~~L^1(((0,T)\times\mathcal{O}); \mm \dd v \dd x \dd t).
\end{align}

The energy estimate of $u$ and $\theta$ \eqref{est-u-theta-f} can be obtained by combining the relative entropy inequality and \eqref{limit-convergence-q}, see \cite[Sec. 6]{lm2010soft}.

{\bf Verify Dirichlet boundary condition.}

In the interior domain,  the fluctuations $\{g_\epsilon\}$ tend to an infinitesimal Maxwellian as $\epsilon $ tends to zero. At the mean time, NSF equations can be derived.  In what follows, we try to derive the boundary conditions of $u$ and $\theta$. First, we prepare some estimates on the traces of $\tilde{g}_\eps$.
\begin{lemma}
	\label{lemma-g-hat}
	Under the same assumptions of Theorem \ref{main-result}, there exists  a constant $C$ such that
	\begin{align}
	\label{est-g-gamma-boundary}\|\gamma (\tilde{g}_\epsilon) (t) \|_{L^2(\dd \varsigma  \dd \sigma_x)}^2\le C \cdot  C_0 \epsilon.
	\end{align}
\end{lemma}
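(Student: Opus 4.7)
The strategy is to bound $(\gamma\tilde g_\epsilon)^2$ pointwise by a multiple of $h(\gamma G_\epsilon)/\epsilon^2$, lift the estimate to the trace via Remark \ref{remark-renormlized}, and then control the resulting boundary relative entropy by the smallness hypotheses on the initial and incoming data through the entropy dissipation inequality \eqref{est-f-eps-entropy}.

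The key ingredient is the elementary pointwise inequality
\[
\epsilon^2\,\tilde g_\epsilon^{\,2}\;\le\; C\,h(G_\epsilon),\qquad G_\epsilon\ge 0,
\]
which is specific to the renormalization $\beta(z)=(z-1)/[1+(z-1)^2]$ used to define $\tilde g_\epsilon$. Since $\epsilon\tilde g_\epsilon=(G_\epsilon-1)/[1+(G_\epsilon-1)^2]$ has modulus at most $\tfrac{1}{2}$, only a two-case analysis is needed: for $G_\epsilon$ near $1$ one has $h(G_\epsilon)\sim \tfrac{1}{2}(G_\epsilon-1)^2$, which directly dominates the left-hand side, and for $G_\epsilon$ far from $1$ the left-hand side is bounded by $1$ while $h(G_\epsilon)$ stays bounded below by a positive constant. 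Because $\gamma$ commutes with the renormalization $\beta$ (Remark \ref{remark-renormlized}), the same inequality persists with $G_\epsilon$ and $\tilde g_\epsilon$ replaced by their traces on $\Sigma$.

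Multiplying by $\mathrm{M}$ (which is uniformly bounded) and integrating against $\dd\mu\,\dd\sigma_x\,\dd s$ on $\Sigma\times(0,t)$, using $\dd\varsigma=\mathrm{M}\,\dd\mu$, one obtains
\[
\epsilon^2\int_0^t\!\!\int_\Sigma(\gamma\tilde g_\epsilon)^2\,\dd\varsigma\,\dd\sigma_x\,\dd s\;\le\; C\int_0^t\!\!\int_\Sigma h(\gamma G_\epsilon)\,\dd\mu\,\dd\sigma_x\,\dd s.
\]
Dropping the nonnegative interior entropy $\epsilon H(G_\epsilon)(t)$ and the dissipation $\tfrac{1}{\epsilon}\int_0^t\mathcal{D}(F_\epsilon)\,\dd s$ on the left of \eqref{est-f-eps-entropy}, the right-hand side above is at most $\epsilon\,H(G_\epsilon^0)+\int_0^t\!\!\int_{\Sigma_-}h(z_\epsilon)\,\dd\mu\,\dd\sigma_x\,\dd s$. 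By \eqref{be-f-initial-eps} and \eqref{closetom} this is bounded by $C(C_0+C_1)\epsilon^3$, and dividing through by $\epsilon^2$ produces the claimed $O(\epsilon)$ estimate.

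The one step requiring care is the pointwise comparison $\epsilon^2\tilde g_\epsilon^{\,2}\le C\,h(G_\epsilon)$: the renormalization $\beta$ has been designed precisely so that $\epsilon\tilde g_\epsilon$ remains bounded even when $G_\epsilon-1$ is arbitrarily large, so one must verify the coerciveness of $h$ in the tail regime explicitly rather than relying on a Taylor expansion alone. Once this comparison is in hand, the remainder of the argument is a direct application of the boundary part of the entropy inequality together with the smallness hypotheses on the initial and incoming boundary data.
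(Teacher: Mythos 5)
Your proof is correct and follows essentially the same route as the paper: a pointwise bound $\epsilon^2\tilde g_\epsilon^2\le C\,h(G_\epsilon)$, passage to the trace, and then the scaled boundary entropy inequality \eqref{est-f-eps-entropy} together with \eqref{be-f-initial-eps} and \eqref{closetom}. The only difference is cosmetic: the paper obtains the pointwise comparison by factoring $\tilde g_\epsilon^2=\frac{g_\epsilon^2}{1+\frac{1}{3}\epsilon g_\epsilon}\cdot\frac{1+\frac{1}{3}\epsilon g_\epsilon}{(1+(\epsilon g_\epsilon)^2)^2}$ and citing Ars\'enio's Lemma 8.1 for $\frac{g_\epsilon^2}{1+\frac{1}{3}\epsilon g_\epsilon}\le 2h(G_\epsilon)/\epsilon^2$, whereas you verify the combined inequality directly by the near-$1$/far-from-$1$ dichotomy (and your final constant $C(C_0+C_1)$ is in fact the more accurate one).
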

\begin{remark}\label{remark-trace}
	From \cite[pp.1273]{masmoudi-srm2003stokesfourier}, for the solutions with Maxwell reflection boundary, they could only infer  some estimates like $(\gamma_+ g_\epsilon  - \la \gamma_+ g_\epsilon\ra|_{\partial\Omega})$ from the relative entropy estimates. But for incoming boundary condition, we can directly deduce some full estimate of  $\gamma g_\epsilon$ from the relative entropy estimate \eqref{est-f-eps-entropy}.
\end{remark}

\begin{proof}
\begin{align*}
\tilde{g}_\eps^2 & = \big( \frac{g_\eps}{1 + (\eps g_\eps)^2} \big)^2  =  \frac{g_\eps^2}{1 + \tfrac{1}{3} \eps g_\eps } \cdot \frac{1 + \frac{1}{3} \eps g_\eps}{(1 + (\eps g_\eps)^2)^2}   \le C \cdot \frac{g_\eps^2}{1 + \tfrac{1}{3} \eps g_\eps }.
\end{align*}
where $C = \sup_{z \ge -1} \frac{1 + \frac{z}{3}}{( 1 + {z^2})^2} $.

On the other hand, from  \cite[Lemma 8.1]{diogo2012},
\begin{align*}
\frac{g_\eps^2}{1 + \tfrac{1}{3} \eps g_\eps } \le 2 \frac{h(G_\eps)}{\eps^2}.
\end{align*}
Recalling that
\[ \frac{1}{\epsilon^2}\int_{0}^{t}\int_{\Sigma_\pm }h(\gamma  G_\epsilon  ) \mathrm{d}\varsigma \mathrm{d}\sigma_x\mathrm{d}s \le C_0 \eps, \]
we complete the proof.
\end{proof}

With the estimates \eqref{est-u-theta-f} and \eqref{est-g-gamma-boundary}  at our disposal, we can get the boundary conditions of $u$ and $\theta$.
\begin{lemma}
	\label{lemma-boundary}
	Under the same assumptions of Theorem \ref{main-result}, the traces of the limiting fluctuation
	\[g|_{\partial\Omega} \in L^1_{loc}(\dd \sigma_x \dd t; L^1(\dd \varsigma))  \]
	satisfy the identity
	\[  g|_{\partial\Omega} = \uu|_{\partial\Omega} \cdot v + \theta|_{\partial\Omega}\frac{|v|^2-5}{2}.\]
	Furthermore,
\begin{align*}
\uu|_{\partial\Omega} =  \theta|_{\partial\Omega} = 0.
\end{align*}
\end{lemma}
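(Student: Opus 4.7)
The plan is to pass to the limit $\eps\to 0$ in a weak formulation of the kinetic transport for $\tilde{g}_\eps$, use the strong $L^2$ smallness of $\gamma\tilde{g}_\eps$ provided by Lemma~\ref{lemma-g-hat} to show that the trace of the limit $g$ vanishes on $\Sigma$, and then exploit the infinitesimal-Maxwellian form of $g$ together with the Boussinesq relation $\rho+\theta=0$ (already obtained from the interior limit) to extract the homogeneous Dirichlet conditions $\uu|_{\partial\Omega}=\theta|_{\partial\Omega}=0$.

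\textbf{Smallness of the trace and weak-trace identification of $g$.} Lemma~\ref{green-formula-Lp} applied to the renormalized equation \eqref{renormalize-non} yields the existence of $\gamma\tilde{g}_\eps$ in $L^1_{loc}$. The pointwise-in-$t$ bound of Lemma~\ref{lemma-g-hat}, integrated over $[0,T]$, gives
\[
\int_0^T\!\!\int_{\partial\Omega\times\mathbb{R}^3}|\gamma\tilde{g}_\eps|^2\,\dd\varsigma\,\dd\sigma_x\,\dd t\le CC_0T\eps,
\]
so $\gamma\tilde{g}_\eps\to 0$ strongly in $L^2(\dd\varsigma\,\dd\sigma_x\,\dd t)$. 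This full control over both $\Sigma_\pm$ is unavailable for Maxwell reflection (cf.\ Remark~\ref{remark-trace}) and is the main quantitative consequence of the well-prepared incoming data \eqref{closetom}. For any $\phi\in\mathcal{D}([0,T]\times\bar\Omega\times\mathbb{R}^3)$ vanishing at $t=0,T$, the Green formula applied to $\eps\partial_t\tilde{g}_\eps+v\cdot\nabla_x\tilde{g}_\eps=q_\eps$ reads
\[
\int_0^T\!\!\int_\mathcal{O}\tilde{g}_\eps(\eps\partial_t\phi+v\cdot\nabla_x\phi)\,\dd v\dd x\dd t + \int_0^T\!\!\int_\mathcal{O}q_\eps\phi\,\dd v\dd x\dd t = \int_0^T\!\!\int_\Sigma\gamma\tilde{g}_\eps\,\phi\,(\mathrm{n}\cdot v)\,\dd v\dd\sigma_x\dd t.
\]
Letting $\eps\to 0$: the $\eps\partial_t\phi$ term drops out since $\tilde{g}_\eps$ is bounded in $L^\infty((0,T);L^2(\mm\dd v\dd x))$ by \eqref{est-g-eps-non}; $\tilde{g}_\eps\weak g$ and $q_\eps\weak q=v\cdot\nabla_x g$ by \eqref{limit-convergence-q}; and the right-hand side vanishes by Cauchy--Schwarz against the $L^2$ smallness above. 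Combining the limit identity with the Green formula applied to $v\cdot\nabla_x g=q$ forces $\gamma g=0$ on $(0,T)\times\Sigma$ in the distributional sense.

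\textbf{Extracting the Dirichlet conditions.} In the interior $g=\rho+\uu\cdot v+\tfrac{\theta}{2}(|v|^2-3)$; the Boussinesq relation $\rho+\theta=0$ already produced by the interior analysis rewrites this as $g=\uu\cdot v+\theta\tfrac{|v|^2-5}{2}$. The Leray regularity $\uu,\theta\in L^\infty_t L^2_x\cap L^2_t H^1_x$ inherited from the energy estimate \eqref{est-u-theta-f} supplies well-defined traces $\gamma\uu,\gamma\theta$ on $\partial\Omega$, so $g|_{\partial\Omega}=\gamma\uu\cdot v+\gamma\theta\,\tfrac{|v|^2-5}{2}\in L^1_{loc}(\dd\sigma_x\dd t;L^1(\dd\varsigma))$. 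Equating this with $\gamma g=0$ and testing in velocity against $v\mm$ and $\tfrac{|v|^2-5}{2}\mm$, using the Gaussian moment identities $\la v_iv_j\ra=\delta_{ij}$, $\la v_i\tfrac{|v|^2-5}{2}\ra=0$, and $\la(\tfrac{|v|^2-5}{2})^2\ra>0$, decouples the system and yields $\gamma\uu=0$ and $\gamma\theta=0$ on $\partial\Omega$.

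\textbf{Main obstacle.} The delicate point is the weak-trace passage: $\tilde{g}_\eps$ converges only weakly in the interior and its trace lives in a weak function space, so the boundary value of the limit $g$ cannot be identified by pointwise limits. The identification works only because Lemma~\ref{lemma-g-hat} furnishes \emph{strong} $L^2$ smallness of $\gamma\tilde{g}_\eps$, a direct consequence of the $O(\eps^3)$ closeness \eqref{closetom} of the incoming data to $\mm$. Without this full strong smallness (as in the Maxwell-reflection setting, where only the fluctuation relative to its boundary average is controlled), only a partial trace estimate would be available and extraction of a Dirichlet-type boundary condition would require a kinetic boundary-layer analysis in the spirit of \cite{bcn1986}.
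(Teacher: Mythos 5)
Your proposal is correct and follows essentially the same route as the paper: pass to the limit in the weak formulation of $\eps\partial_t\tilde{g}_\eps+v\cdot\nabla_x\tilde{g}_\eps=q_\eps$ tested against $\psi\mm$, kill the boundary term using the $O(\sqrt{\eps})$ trace estimate of Lemma \ref{lemma-g-hat}, identify $\gamma g=0$ by comparison with the Green formula for $v\cdot\nabla_x g=q$, and then read off $\uu|_{\partial\Omega}=\theta|_{\partial\Omega}=0$ from the infinitesimal-Maxwellian form of $g$. Your additional details (the time-integrated $L^2$ smallness of the trace and the velocity-moment decoupling of $\uu$ and $\theta$) merely make explicit steps the paper leaves implicit.
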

\begin{proof} For any $T \ge 0$,  multiplying the following equation by $\psi \mm$ with $\psi \in \mathcal{D}([0,T]\times \bar{\Omega}\times\mathbb{R}^3)$
	\begin{align*}
	\eps \partial_t \tilde{g}_\eps +    v \cdot \nabla_x \tilde{g}_\eps = \tfrac{1}{\eps^2   } (\tfrac{2}{N_\eps^2} - \tfrac{1}{N_\eps})\B(G_\eps,G_\eps) = {q}_\eps,
	\end{align*}
	it follows that
	\begin{align*}
	0  = & -   \int_{0}^{T} \int_{\mathcal{O}}    {q}_\eps  \psi \mm     \mathrm{d}v \dd x\mathrm{d}t + \epsilon \int_{\mathcal{O}} \tilde{g}_\epsilon(T) \psi \mm   \mathrm{d}v \dd x \\
	& - \epsilon \int_{\mathcal{O}} \tilde{g}_\epsilon^0 \psi \mm   \mathrm{d}v \dd x +   \int_{0}^{T}  \int_{\Sigma_\pm} \gamma_\pm \tilde{g}_\eps \psi  \mathrm{n}(x) \cdot v \mm \dd v \dd \sigma_x\dd s \\
	&  -   \int_{0}^{T} \int_{\mathcal{O}} \tilde{g}_\eps(\epsilon\partial_t \psi  + v \cdot \nabla_x \psi)  \mm    \mathrm{d}v \dd x\mathrm{d}t.
	\end{align*}
	As the mean free path goes to zero, with the help of \eqref{limit-convergence-q} and Lemma \ref{lemma-g-hat},
	\begin{align*}
	\int_0^T\int_\Omega\la g \cdot   ( v \cdot \nabla_x )\psi \ra \dd x \dd s +  \int_0^T\int_\Omega\la \psi \cdot   ( v \cdot \nabla_x ) g \ra \dd x \dd s = 0.
	\end{align*}
	Since $\psi$ can be chosen arbitrarily in $\mathcal{D}([0,T]\times \bar{\Omega}\times\mathbb{R}^3)$.  The above equation gives
	\[ \gamma(g) = \uu|_{\partial\Omega} \cdot v + \theta|_{\partial\Omega}\tfrac{|v|^2-5}{2} = 0.  \]
	Thus,
	\begin{align*}
	\uu|_{\partial\Omega} = \theta|_{\partial\Omega} = 0.
	\end{align*}
\end{proof}

\end{document}